\newcommand{\Ao}{\mathcal{A}_0}
\newcommand{\Af}{\mathcal{A}_f}
\newcommand{\eps}{\varepsilon}
\renewcommand{\P}{\mathbb{P}}
\newcommand{\E}{\mathbb{E}}
\newcommand{\R}{\mathbb{R}}
\newcommand{\D}{\mathcal{D}_R}
\newcommand{\V}{\mathrm{V}_N}
\newcommand{\G}{\mathcal{G}}
\newcommand{\K}{\mathcal{K}_{\mathbf{r}}(\mathcal{B}_0)}
\newcommand{\N}{\mathbb{N}}
\newcommand{\ve}{\varepsilon}
\renewcommand{\r}{\mathbf{r}}
\DeclareMathOperator{\bin}{Bin}
\theoremstyle{plain}
\newtheorem{Theorem}{Theorem}[section]
\newtheorem{Corollary}[Theorem]{Corollary}
\newtheorem{Lemma}[Theorem]{Lemma}
\newtheorem{Proposition}[Theorem]{Proposition}
\newtheorem{Claim}[Theorem]{Claim}
\theoremstyle{definition}
\newtheorem{Remark}[Theorem]{Remark}
\author{Elisabetta Candellero\footnote{Department of Statistics, University of Warwick, Coventry CV4 7AL, UK. Email: \texttt{elisabetta.candellero@gmail.com}} \and Nikolaos Fountoulakis\footnote{School of Mathematics, University of Birmingham, Birmingham B15 2TT, UK. Email: \texttt{n.fountoulakis@bham.ac.uk}, Research supported by a Marie Curie Career Integration Grant PCIG09-GA2011-293619 and an EPSRC Grant (No. EP/K019749/1).}}
\title{Bootstrap percolation and the geometry of complex networks\footnote{\textbf{Keywords}: Random geometric graph, hyperbolic plane, bootstrap percolation, percolation.}}
\begin{document}
\maketitle

\begin{abstract}
On a geometric model for complex networks (introduced by Krioukov et al.) we investigate the bootstrap percolation process. This model consists of random geometric graphs on the hyperbolic plane having $N$ vertices, a dependent version of the Chung-Lu model.
The process starts with infection rate $p=p(N)$. Each uninfected vertex with at least $\r\geq 1$ infected neighbors becomes infected, remaining so forever.
We identify a function $p_c(N)=o(1)$ such that a.a.s. when $p\gg p_c(N)$ the infection spreads to a positive fraction of vertices, whereas when $p\ll p_c(N)$ the process cannot evolve.
Moreover, this behavior is ``robust'' under random deletions of edges.
\end{abstract}


\section{Introduction}
\emph{Bootstrap percolation} is a deterministic process, characterized by a \emph{cascade behavior}, in which every vertex has two possible states: either \emph{infected} or \emph{uninfected} (sometimes also referred to as \emph{active} or \emph{inactive}, respectively).
A fixed integer $\r\geq 1$, called the \emph{activation threshold}, determines the evolution of the process, which occurs in rounds.

Initially, on the graph $G=G(V,E)$ there is a subset $\Ao \subseteq V$ which consists of infected vertices (vertices belonging to $V\setminus \Ao$ are uninfected) that can be selected deterministically or at random.

Subsequently, in each round, if an uninfected vertex has at least $\r$ infected neighbors, then it also becomes infected
and remains so forever. This is repeated until no more vertices become infected. We denote the final infected set by $\Af$.

This process was introduced by Chalupa, Leath and Reich~\cite{ChLeRe:79} in 1979 in the context of
magnetic disordered systems and has been re-discovered since then by several authors mainly due to its connections
with various physical models.

These processes have been used as models to describe several complex phenomena in diverse areas, from jamming 
transitions~\cite{tobifi06} and magnetic systems~\cite{sadhsh02} to neuronal activity~\cite{Am-nn, ACM11, ET09}. 
A short survey regarding applications of bootstrap percolation processes can be found in~\cite{AdL03}.

In the present paper, we consider a geometric framework for complex networks that was introduced by Krioukov et al.~\cite{ar:Krioukov}.
The theory of complex networks has been developed as a unifying mathematical framework that expresses features of 
a variety of networks: biological networks, large computer networks such as the Internet, the World Wide Web as well as
social networks that have been recently developed over these platforms. 
Experimental evidence (cf.~\cite{ChungLu2006},~\cite{ar:StatMechs}) has shown that these networks exhibit a few basic characteristics:
their degree distribution seems to follow a power-law, they exhibit local clustering and, finally, the typical distances between vertices
are small (this is known as the \emph{small world effect}).
In fact, most of these networks appear to have a degree distribution that has a power-law tail with exponent between 2 and 3 
(see~\cite{ar:StatMechs}). 

During the last 15 years there has been a continuous effort to develop models of
random networks which typically exhibit all the above features simultaneously. 
Among the most influential models was the Watts-Strogatz model of small 
worlds~\cite{ar:WatStrog98} and the Barab\'asi-Albert model~\cite{ar:BarAlb}, which is also known as the \emph{preferential attachment model}. 
The framework of Krioukov et al.~\cite{ar:Krioukov} represents the inherent inhomogeneity of a complex network with the use of the 
hyperbolic plane.
Intuitively, the intrinsic hierarchies 
that are present in a complex network induce a tree-like structure which is effectively embedded into the hyperbolic plane. 
The aim of this work is to shed some light on the evolution of a bootstrap percolation process and how this is determined by the geometry 
of the underlying network.

\subsection{Random geometric graphs on the hyperbolic plane and inhomogeneous random graphs}

The most common representations of the hyperbolic plane are the upper-half plane representation $\{z=x+iy \ : \ y > 0 \}$ as
well as the Poincar\'e unit disk which is simply the open disk of radius one, that is, $\{(u,v) \in \mathbb{R}^2 \ : \ 1-u^2-v^2 > 0 \}$.
Both spaces are equipped with the hyperbolic metric; in the former
case this is ${1\over y^2}dy^2$ whereas in the latter this is ${4}~{du^2 + dv^2\over (1-u^2-v^2)^2}$.
It is well-known that the (Gaussian) curvature in both cases is equal to $-1$ and that the two spaces are isometric. 
In fact, there are more representations of the hyperbolic plane of curvature $-1$, which are isometrically equivalent to the
above two. We will denote by $\mathbb{H}^2$ the class of these spaces.

In this paper, following the definitions in~\cite{ar:Krioukov}, we shall be using the \emph{native} representation of $\mathbb{H}^2$.
Under this representation, the ground space of $\mathbb{H}^2$ is $\mathbb{R}^2$ and every point $x \in \mathbb{R}^2$ whose
polar coordinates are $(r,\theta)$ has hyperbolic distance from the origin equal to its Euclidean distance, which is equal to $r$. 
Alternatively, the native representation can be thought of as a mapping of the Poincar\'e disk into $\mathbb{R}^2$. Under this
representation, a point $p$ in the Poincar\'e disk that is at hyperbolic distance $r$ from the origin and angle $\theta$ with respect to 
the horizontal axis is mapped to the point $p'$ of Euclidean distance $r$ from the origin of $\mathbb{R}^2$, preserving the angle. 

We are now ready to give the definitions of the two basic models introduced in~\cite{ar:Krioukov}.
Consider the native representation of the hyperbolic plane. Let $N$ be the number of vertices of the random graph. 
This is the parameter with respect to which we do asymptotics. 
For some fixed constant $\nu >0$, let $R>0$ satisfy $N= \lfloor \nu e^{R /2}\rfloor$.
For simplicity, we will omit $\lfloor \cdot\rfloor$ as this does not affect our calculations.  
We select randomly and independently $N$ points from the disk of radius $R$ centered at the origin $O$, which we denote by $\D$.

Each of these points is distributed as follows.  Assume that a random point $u$ has
\emph{polar} coordinates $(r, \theta)$. The angle $\theta$ is uniformly distributed in $(0,2\pi]$ and the probability density function of
$r$, which we denote by $\rho_N (r)$, is determined by a parameter $\alpha >0$ and is equal to
\begin{equation} \label{eq:pdf}
 \rho(r) = \rho_N (r) = \begin{cases}
\alpha {\sinh  \alpha r \over \cosh \alpha R - 1}, & \mbox{if $0\leq r \leq R$} \\
0, & \mbox{otherwise}
\end{cases}.
\end{equation}
The above distribution is simply the uniform distribution on $\D$, but \emph{on the hyperbolic plane of curvature $-\alpha^2$}.  
With elementary but tedious calculations, it can be shown that 
the length of a circle of radius $r$ (centered at the origin) on the hyperbolic plane of curvature $-\alpha^2$ is
$\frac{2\pi}{\alpha}~\sinh (\alpha r)$, whereas the area of the circle of radius $R$ (centered at the origin) is
$\frac{2\pi}{\alpha^2}(\cosh (\alpha R) - 1)$. 
Hence, when $\alpha = 1$, the above becomes the uniform distribution.

An alternative way to define this distribution is as follows. 
Consider the disk $\D'$ of radius $R$ around the origin $O'$ of (the native representation of) the hyperbolic plane of 
curvature $-\alpha^2$. Select $N$ points independently within $\D'$, uniformly at random. 
Subsequently, the selected points are projected onto $\D$ preserving their polar
coordinates. The projections of these points, which we will be denoting by $\V$, will be the vertex set of the random graph.

Note that the curvature in this case determines the rate of growth of the space. Hence, when $\alpha < 1$, the $N$ points 
are distributed on a disk (namely $\D'$) which has smaller area compared to $\D$. This naturally increases the density of those points 
that are located closer to the origin. Similarly, when $\alpha > 1$ the area of the disk $\D'$ is larger than that of $\D$, and most of the 
$N$ points are significantly more likely to be located near the boundary of $\D'$, due to the exponential growth of the volume. 

Given the set $\V$ on $\D$ we define the random graph $\G (N; \alpha, \nu)$  on $\V$, where  
two distinct vertices are joined precisely if they are within (hyperbolic) distance $R$ from each other.

\subsubsection{$\G (N; \alpha, \nu)$ and the Chung-Lu model} \label{sec:geom_asp}
The notion of \emph{inhomogeneous random graphs} was introduced by S\"oderberg~\cite{ar:s02} but
was defined more generally and studied in great detail by Bollob\'as, Janson and Riordan in~\cite{BJR}.
In its most general setting, there is an underlying compact metric space $\mathcal{S}$ equipped with a measure $\mu$ on its 
Borel $\sigma$-algebra. This is the space of \emph{types} of the vertices (defined below). A \emph{kernel} $\kappa$ is a bounded real-valued,
non-negative function on $\mathcal{S} \times \mathcal{S}$, which is symmetric and measurable. 
The vertices of the random graph can be understood as points in $\mathcal{S}$. 
If $x, y \in \mathcal{S}$, then the corresponding vertices are joined with probability 
${\kappa (x,y) \over N} \wedge 1$, independently of every other pair ($N$ is the total number of vertices). 
The points that are the vertices of the graph are approximately distributed according to $\mu$. 
More specifically, the empirical distribution function on the $N$ points converges weakly to $\mu$ as $N \rightarrow \infty$. 

Of particular interest is the case where the kernel function can be factorized and can be written $\kappa (x,y) = t(x)t(y)$; this is called a \emph{kernel of rank 1}. 
Here, the function $t(x)$ represents the weight of the type of vertex $x$ and, in fact, it is approximately its expected degree. 
The special case where $t(x)$ follows a distribution that has a power law tail was considered by Chung and Lu in a series of papers
~\cite{ChungLu1+}, \cite{ChungLuComp+} (see also~\cite{bk:vdH}). 

In the random graph $\G (N; \alpha, \nu)$ the probability that two vertices are adjacent has this form.  
The proof of this fact relies on Lemma~\ref{lem:relAngle}, which we will state and prove later. 
This provides an approximate characterization of what it means for two points $u,v$ to
have hyperbolic distance at most $R$ in terms of their \emph{relative angle}, which we denote by $\theta_{u,v}$. 
For this lemma, we need the notion of the \emph{type} of a vertex. 
For a vertex $v \in \V$, if $r_v$ is the distance of  $v$ from the origin, that is, the radius of $v$, then we set $t_v = R - r_v$ -- 
we call this quantity the \emph{type} of vertex $v$. 
As we shall shortly see, the type of a vertex is approximately exponentially distributed. If we substitute $R-t$ for $r$ in 
(\ref{eq:pdf}), then assuming that $t$ is fixed that expression becomes asymptotically equal to $\alpha e^{-\alpha t}$. 
By Lemma~\ref{lem:relAngle}, two vertices $u$ and $v$ of types $t_u$ and $t_v$ are within distance $R$ (essentially) if and only if 
$\theta_{u,v} < 2 \nu {e^{t_u/2} e^{t_v/2}}/N$. Hence, conditional on their types the probability that $u$ and $v$ are adjacent is 
proportional to ${e^{t_u/2} e^{t_v/2}}/N$. If we set $t(u)= e^{t_u/2}$, then $\P (t(u) \geq x) = \P (t_u \geq 2 \ln x)\asymp 
e^{-2\alpha \ln x} = 1/x^{2\alpha}$. In other words, the distribution of $t(u)$ has a power-law tail with parameter $2\alpha$. 
Thus, the random graph $\G (N; \alpha, \nu)$ is a \emph{dependent} version of the Chung-Lu model that emerges naturally from the 
hyperbolic geometry of the underlying space. The fact that this is a random geometric graph gives rise to the existence of local 
clustering, which is missing in the Chung-Lu model. There, most vertices have tree-like neighborhoods.  

In fact, it can be shown that the degree of a vertex $u$ in $\G (N; \alpha, \nu)$ that has type $t_u$ is approximately distributed as 
a Poisson random variable with parameter proportional to $e^{t_u/2}$. This is shown only implicitly in Lemma~\ref{lemma:deg_distrib}. 

Gugelmann, Panagiotou and Peter~\cite{ar:Kosta} showed that the degree of a vertex has a power law with exponent $2\alpha  +1$.
If $\alpha  > 1/2$, then the exponent of the power law may take any value greater than 2. 
When $1 > \alpha >  1/ 2$, this exponent is between 2 and 3.
They also showed that the average degree is a constant that depends on $\alpha$ and $\nu$,
and that the clustering coefficient (the probability of two vertices with a common
neighbor to be joined by an edge) of $\G (N;\alpha, \nu)$ is asymptotically  bounded away from $0$ with probability $1-o(1)$ as
$N\rightarrow \infty$. 

Furthermore, the second author together with Bode and M\"uller~\cite{bode_fountoulakis_mueller} showed that $\G (N;\alpha, \nu)$ with high probability has a \emph{giant component}, that is, a connected component containing a linear number of
vertices if $1 >\alpha > 1/2$. 
When $\alpha > 1$, the size of the largest component is bounded by a function that is \emph{sublinear} in $N$. 
Recently, Kiwi and Mitsche~\cite{ar:KiMits} showed that in the supercritical regime, 
the order of the second largest component is bounded by a polylogarithmic function of $N$ a.a.s.

\subsection{Results} 
The main result of this paper regards the size of the final set $\Af$ of a bootstrap percolation process with activation threshold $\r\geq 1$ 
on $\G (N; \alpha, \nu)$, with $1 > \alpha > 1/2$. 
We shall assume that the initially infected set $\Ao$ is a random subset of $\V$,
where each vertex is included independently with probability $p$. We call $p$ the \emph{initial infection rate}. 
We shall be assuming that $p$ \emph{does} depend on $N$. 
In fact, we will identify a critical infection rate (cf.\ Theorem~\ref{thm:main}) such that when $p$ ``crosses" this critical function
the evolution of the bootstrap process changes abruptly. This critical density converges to 0 as $N$ grows. 
Our results imply that a sub-linear initial infection results with high probability in the spread of the infection to 
a positive fraction of $\V$. 

Our hypothesis is that $1/2 < \alpha < 1$, whereby the random graph $\G (N;\alpha, \nu)$ exhibits power law degree distribution with 
exponent between 2 and 3. The second author and Amini~\cite{AmFou2014} showed a result analogous to Theorem~\ref{thm:main} 
for the Chung-Lu model with the exponent of the power law between 2 and 3.

In the present work, we additionally show that this phenomenon is robust under random deletions of the edges of 
$\G (N;\alpha, \nu)$. Assuming that we retain each edge independently with constant (independent of $N$) probability $\rho>0$, we let
$\G(N;\alpha, \nu,\rho)$ denote the resulting random graph. 
Since the number of edges of $\G(N;\alpha, \nu)$ is proportional to $N$ with high probability, it follows that if we allow 
$\rho = o(1)$, then $\G(N;\alpha, \nu,\rho)$ has only sub-linear components with high probability. 

\begin{Remark}[Notation]
We say that a sequence of events $\mathcal{E}_N$ on the space of graphs incurred by $\G (N;\alpha, \nu , \rho)$ occurs \emph{asymptotically almost surely (a.a.s.)} whenever $\mathbb{P} (\mathcal{E}_N) \to 1$ as $N \to \infty$. 
If $X_N$ is a random variable defined on $\G (N;\alpha, \nu , \rho)$, then we write $\liminf_{N \to \infty} X_N > 0$ \emph{a.a.s.}, 
if there exists a real number $c>0$ such that $X_N >c$, a.a.s.. 

For any two functions $f,g: \mathbb{N} \rightarrow \mathbb{R}^+$ we write $f(N) \gtrsim g(N)$ to denote that there is a constant $C>0$ such that $f(N) \geq C g(N)$ eventually as $N\rightarrow \infty$. 
Analogously we write $f(N) \lesssim g(N)$ if there is a constant $c>0$ such that $f(N) \leq c g(N)$ eventually as $N\rightarrow \infty$. 
Moreover, we write $f(N) \asymp g(N)$ if both $f(N) \lesssim g(N)$ and $g(N) \lesssim f(N)$ hold simultaneously. 
Finally, we write $ f(N) \ll g(N)$ or $f(N) \gg g(N)$ if $f(N)/g(N)\to 0 $ or $f(N)/g(N)\to \infty $ respectively.
\end{Remark}
Now we can state our main result.
\begin{Theorem} \label{thm:main} 
Let $\r\geq 2$ be an integer, let $\rho \in (0,1]$ and $1/2 <\alpha < 1$. Consider a bootstrap percolation process on $\G(N;\alpha, \nu,\rho)$ with activation threshold $\r$ and initial infection rate $p(N)$.
Then the following hold
\begin{itemize}
\item[(i)] If $\displaystyle p(N)N^{1/2\alpha}\to \infty$, then $\displaystyle \liminf_{N \rightarrow \infty} {|\Af| \over N} > 0$ a.a.s.; 
\item[(ii)] if $\displaystyle p(N)N^{1/2\alpha}\to \gamma \in \R^+$, then $\displaystyle \liminf_{N \rightarrow \infty} {|\Af| \over N} > 0$ with positive probability;
\item[(iii)] if $\displaystyle p(N)N^{1/2\alpha}\to 0$, then $\displaystyle |\Af|=|\mathcal{A}_0|$ a.a.s.
\end{itemize}
\end{Theorem}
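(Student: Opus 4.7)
\emph{Heuristic and part (iii).} The critical scale $p_c(N)=N^{-1/(2\alpha)}$ is the reciprocal of the typical maximum expected degree: since types $t_v=R-r_v$ are asymptotically exponential with rate $\alpha$, the largest type in a sample of size $N$ is of order $(\ln N)/\alpha$, giving a maximum expected degree $\asymp N^{1/(2\alpha)}$. For~(iii) a first-moment estimate suffices. By Lemma~\ref{lem:relAngle} the adjacency probability between vertices of types $t_u,t_v$ is at most $\rho\cdot(2\nu/(\pi N))e^{(t_u+t_v)/2}\wedge 1$, so a type-$t$ vertex has expected degree $\asymp \rho e^{t/2}$ (see also Lemma~\ref{lemma:deg_distrib}). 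The probability that such a vertex has $\ge\r$ initially infected neighbors is therefore at most $\min\bigl(1,(C\rho p e^{t/2})^{\r}\bigr)$ for some $C=C(\alpha,\nu)$. Integrating against the type density $\alpha e^{-\alpha t}$ and splitting at $t^{*}=2\ln(1/(\rho p))$, both the $t<t^{*}$ and $t\ge t^{*}$ pieces contribute $\asymp Np^{2\alpha}$ (using $\alpha<1\le\r/2$). When $pN^{1/(2\alpha)}\to 0$, this upper bound on $\mathbb{E}[|\Af\setminus\Ao|]$ is $o(1)$ and Markov's inequality yields $\Af=\Ao$ a.a.s.

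\emph{Part (i).} The plan is a layered cascade through decreasing type thresholds. Fix large constants $C_0,C_1$ and set
\[
T_0:=2\ln\!\left(\frac{C_0}{\rho p}\right),\qquad T_{k+1}:=(2\alpha-1)T_k+2\ln C_1.
\]
Since $2\alpha-1\in(0,1)$, this linear recursion converges geometrically to $T^{*}:=2\ln C_1/(2-2\alpha)$ (a constant in $N$), and $K=O(\log\log N)$ iterations suffice to drive $T_K$ below $2T^{*}$. I would prove by induction on $k\le K$ that a $(1-o(1))$-fraction of vertices of type $\ge T_k$ are infected by round $k+1$. For the base case, each vertex of type $\ge T_0$ has expected $\ge C_0$ initially infected neighbors; since the events $\{u\in\Ao\}$ are i.i.d.\ Bernoulli$(p)$ and independent of the graph, a Chernoff bound makes the per-vertex failure probability at most $e^{-cC_0}$. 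The number of core vertices is $\asymp Np^{2\alpha}\to\infty$, and their neighborhoods are approximately independent (they overlap in $O(1)$ vertices on average), so a second-moment argument gives concentration. For the inductive step, a vertex of type $t\ge T_{k+1}$ has expected number of neighbors of type $\ge T_k$ equal to
\[
\rho\,\alpha\,e^{t/2}\int_{T_k}^{R}e^{(1/2-\alpha)s}\,ds\asymp e^{T_{k+1}/2}e^{(1/2-\alpha)T_k}\ge C_1\r,
\]
by the choice of $T_{k+1}$ and since $\alpha>1/2$ makes the integrand concentrate at $s=T_k$; a Chernoff bound together with Markov's inequality applied to the number of uninfected vertices in the current layer completes the step. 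After $K$ rounds a fraction $\asymp e^{-2\alpha T^{*}}>0$ of $\V$ is infected, giving $\liminf|\Af|/N>0$ a.a.s.

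\emph{Part (ii) and main obstacle.} When $pN^{1/(2\alpha)}\to\gamma$, the first-moment calculation yields $\mathbb{E}[|\Af\setminus\Ao|]\to \kappa\gamma^{2\alpha}>0$ in round~$1$ for an explicit constant $\kappa>0$; an accompanying Poisson approximation (justified by the approximately independent contributions of different angular strips of $\D$) shows that with probability bounded away from zero the round-$1$ infected set already contains enough vertices in $\{t_v\ge T_0\}$ to seed the layered cascade of~(i). A short separate argument is then needed to bootstrap from this constant-sized seed into a $(1-o(1))$-fraction of the highest layer, after which the recursion proceeds as in~(i). The principal technical obstacle throughout is the geometric dependence between edges sharing a common angle: concentration of per-vertex neighbor counts in a given type range must be proved despite these dependencies, which can be handled by partitioning $\D$ into disjoint angular strips, exploiting independence between distant strips, and verifying that the Chernoff-type tail bounds remain uniform in $\rho\in(0,1]$.
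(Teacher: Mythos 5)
Your overall plan — a first-moment bound for (iii), and for (i)–(ii) a cascade through type thresholds shrinking geometrically at rate $\lambda=2\alpha-1$ — is close in spirit to what the paper does. Indeed, your recursion $T_{k+1}=(2\alpha-1)T_k+2\ln C_1$ is essentially the paper's band recursion $t_i-2\ln(\mathrm{const}\cdot t_i)=\lambda t_{i-1}$ run backwards, and your split point $t^*=2\ln(1/(\rho p))$ for part (iii) is the natural threshold. However, there are two substantive gaps that the paper's actual argument is built precisely to fill.

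\textbf{The concentration step is not an afterthought; it is the whole difficulty.} You write that neighborhoods ``are approximately independent (they overlap in $O(1)$ vertices on average), so a second-moment argument gives concentration,'' and later that the dependence ``can be handled by partitioning $\D$ into disjoint angular strips.'' In the Chung--Lu model this is harmless because, conditional on weights, edges are independent; that is why the analogous result in \cite{AmFou2014} goes through with per-vertex Chernoff bounds. But in $\G(N;\alpha,\nu,\rho)$ the edges are \emph{deterministic functions of positions}, so the events ``$u$ has $\ge\r$ infected neighbors in the previous layer'' for nearby $u$'s are strongly positively associated through the angular coordinate, and a naive second moment does not obviously close. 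The paper circumvents this by replacing per-vertex counting with a \emph{coverage} argument: it tiles each circle $\mathcal{C}_i$ into blocks of width $B_i=\theta^{(i)}/t_i$, colors a block black when it is covered by the $R$-disks of at least $\r$ infected vertices of the previous band, and tracks the covered angle $\Theta_i$. Each vertex in $\mathcal{B}_{i-1}$ changes the black count by at most $2t_i$, so the bounded-differences inequality applies directly and gives uniform exponential concentration, with explicit control of the errors $\eps_i=(\theta^{(i)})^{1/6}$ across all $O(\ln R)$ bands. Your proposal identifies the obstacle but does not actually resolve it.

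\textbf{The base step uses a structural fact you do not exploit.} The paper starts the cascade from $\mathcal{B}_0=\{t_v>R/2\}$, which by the triangle inequality is a \emph{clique}; after edge percolation it is $G(N_0,\rho)$ with $N_0\asymp N^{1-\alpha}\to\infty$ and $\rho$ constant. Once $\ge\r$ vertices of $\mathcal{B}_0$ are externally infected (which the hypothesis on $p$ guarantees, via vertices of type near $R/(2\alpha)$ and their huge light degree $\asymp N^{1/(2\alpha)}$), a known bootstrap-percolation result on dense $G(n,\rho)$ infects \emph{all} of $\mathcal{B}_0$ a.a.s. This is how the paper produces a seed band that is infected in its entirety, which is exactly what the coverage/induction needs. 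Your $T_0=2\ln(C_0/(\rho p))$ is not tied to $R/2$; when $p$ is, say, a constant, $T_0$ is $O(1)$ and the vertices above that threshold are far from forming a clique, so no analogous ``ignition'' argument is available. Relatedly, your per-level failure probability is a constant $e^{-cC_0}$ or $e^{-cC_1\r}$, not $o(1)$, so the advertised ``$(1-o(1))$-fraction infected at each level'' is not what the argument would deliver; one can still hope to salvage a constant fraction, but then the accumulated multiplicative loss over $K=O(\log\log N)$ levels must be shown to stay bounded away from $0$ — exactly the bookkeeping the paper does via the three error sums $\sum\theta^{(i)}t_{i-1}/\theta^{(i-1)}$, $\sum e^{-c\rho^{\r}t_i}$, $\sum(\theta^{(i)})^{1/6}$, each bounded by a small constant through a careful choice of $C$. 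Finally, part (ii) in your proposal stops at the statement that a Poisson approximation ``shows'' a positive-probability seed; the paper actually isolates the set $\mathfrak{D}_2$ of vertices of type in $[R/(2\alpha),R/(2\alpha)+\omega(N)]$, whose cardinality is asymptotically Poisson with constant mean, and makes the FKG step explicit — this is needed to make ``with positive probability'' rigorous.
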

The proof of Theorem \ref{thm:main} effectively makes use of a dense core that $\G (N; \alpha, \nu)$ has. 
%
Intuitively, the proof is based on considering the set of vertices appearing ``very close'' to the circumference of radius $R/2\alpha$.
In fact, if $p(N)$ is \emph{large enough}, then at least $\r$ of such vertices either belong to $\mathcal{A}_0$, or will be infected after
the first round (and hence they will spread the infection throughout the graph).
On the other hand, if $p(N)\ll N^{-\frac{1}{2\alpha}}$, then a.a.s. the process does not evolve and the final set of infected 
vertices will coincide with the initial set.

When considering the case for $\G(N;\alpha, \nu,\rho)$,
we have that this central core is a dense binomial random graph where each edge is present with probability $\rho$.
The condition on $p(N)$ ensures that the core becomes
completely infected even in this case. 
Thereafter, we apply an inductive argument which shows that with high probability the infection spreads from the core
to a positive fraction of $\V$. 

Note that when $\r=1$ the final set of infected vertices is the union of connected components that contain at least one infected vertex at the beginning of the process. 
Hence, a reformulation of part $(i)$ of Theorem \ref{thm:main} for $\r=1$ implies that the graph $\G(N;\alpha, \nu,\rho)$ for any $1/2 < \alpha < 1$ and any $\rho > 0$ contains a \emph{giant component}.
In other words, the graph $\G (N;\alpha,\nu)$ contains a giant component which is robust under random edge deletions. 
Let $\ell_1 (G)$ denote the number of vertices in a largest component of a graph $G$. 
\begin{Corollary} \label{cor:giant}
For all $\rho \in (0,1]$ we have a.a.s.
$$ \liminf_{N \rightarrow \infty}{\ell_1 (\G(N;\alpha, \nu,\rho)) \over N} > 0. $$
\end{Corollary}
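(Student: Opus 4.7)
The plan is to reduce the corollary to the $\r=1$ specialization of Theorem~\ref{thm:main}(i), whose underlying argument simplifies when $\r=1$. First, I would fix a sequence $p=p(N)\to 0$ with $p(N)N^{1/(2\alpha)}\to\infty$, for instance $p(N)=N^{-1/(2\alpha)}\log N$. For $\r=1$ the bootstrap process has a particularly simple structure: an uninfected vertex becomes infected as soon as any one of its neighbors is, so upon termination $\Af$ equals the union of those connected components of $\G(N;\alpha,\nu,\rho)$ that meet $\mathcal{A}_0$.

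Next, I would revisit the strategy sketched after Theorem~\ref{thm:main}, specialized to $\r=1$. That strategy singles out a ``dense core'' $\mathcal{C}$ of vertices located very close to the circle of radius $R/(2\alpha)$. By the characterization of hyperbolic distance in Lemma~\ref{lem:relAngle}, any two such vertices lie within hyperbolic distance $R$ of each other, so $\mathcal{C}$ is a clique in $\G(N;\alpha,\nu)$; after retaining each edge independently with probability $\rho$, $\mathcal{C}$ becomes a binomial random graph whose order grows polynomially in $N$ and hence is a.a.s.\ connected. The expected number of initially infected vertices in $\mathcal{C}$ is of order $p\,|\mathcal{C}|\to\infty$, so $\mathcal{C}$ a.a.s.\ meets $\mathcal{A}_0$, and for $\r=1$ a single seed suffices to infect all of $\mathcal{C}$. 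The inductive spreading step then propagates the infection outward along edges, producing a concrete connected subgraph $H\subseteq\G(N;\alpha,\nu,\rho)$ with $|H|\gtrsim N$ a.a.s.

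The decisive point is that $H$ is a \emph{connected} subgraph of $\G(N;\alpha,\nu,\rho)$ whose existence is a property of the underlying random geometric graph alone, independent of $\mathcal{A}_0$. Hence $\ell_1(\G(N;\alpha,\nu,\rho))\geq |H|\gtrsim N$ a.a.s., as required.

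The main obstacle is conceptual rather than computational: one must check that the inductive spreading argument of Theorem~\ref{thm:main}(i) genuinely yields a single connected subgraph of linear size, rather than merely a disjoint union of components whose sizes sum to something linear. (The naive application of the theorem only gives $|\Af|\gtrsim N$; splitting $\Af$ into its components and pigeonholing through $|\mathcal{A}_0|\lesssim pN$ would only yield $\ell_1\gtrsim 1/p$, which is sublinear.) For $\r=1$ this obstacle disappears automatically, because the infection travels only along edges and starts from the connected core, so every vertex of $\Af$ lies in the same graph component as $\mathcal{C}$. This is, I believe, the ``reformulation'' of Theorem~\ref{thm:main}(i) alluded to in the paragraph preceding the corollary.
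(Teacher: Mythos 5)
Your proposal is correct and follows essentially the same route as the paper: for $\r=1$ the quantity bounded in $(\ref{eq:sum_N_i'})$ is the size of $\mathcal{K}_{1}(\mathcal{B}_0)$, which is by construction a \emph{connected} subgraph of $\G(N;\alpha,\nu,\rho)$ of order $\Omega(N)$, so $\ell_1 \geq |\mathcal{K}_{1}(\mathcal{B}_0)| \geq \kappa N$ a.a.s.\ (the paper simply takes $p$ to be a positive constant, making the base step immediate). Two small slips worth noting: pairwise adjacency of the core vertices of $\mathcal{B}_0$ (type $>R/2$, i.e.\ radius $<R/2$) comes from the triangle inequality and \emph{not} from Lemma~\ref{lem:relAngle}, whose hypothesis $t_u+t_v<R-c_0$ fails for such pairs; and for $\r=1$ it is not literally true that every vertex of $\Af$ lies in the component of the core (isolated seeds elsewhere spawn other infected components), though the linear-sized piece your induction actually tracks certainly does, which is all that is needed.
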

A further consequence of part $(i)$ of Theorem~\ref{thm:main} is the existence of an $\r$-core in $\G(N;\alpha, \nu,\rho)$. Recall that 
for any integer $\r\geq 2$ the \emph{$\r$-core} of a graph $G$ is the maximum subgraph of minimum degree at least $\r$. This is a
well-studied notion in the theory of random graphs and hypergraphs 
(see for example~\cite{PitSpWorm},~\cite{Cooper2002},~\cite{Wormald}). 
Let $c_{\r}(G)$ denote the number of vertices of the $\r$-core of a graph $G$. 
\begin{Theorem} \label{cor:core}
For all integers $\r\geq 2$ and all $\rho \in (0,1]$ we have a.a.s.
$$\liminf_{N \rightarrow \infty}{c_{\r} (\G(N;\alpha, \nu,\rho)) \over N} > 0. $$
\end{Theorem}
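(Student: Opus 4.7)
The plan is to derive Theorem~\ref{cor:core} from Theorem~\ref{thm:main}(i) by running the bootstrap percolation process with an initial infection rate chosen to vanish just above criticality, and then extracting an $\r$-core from the final infected set $\Af$. Concretely, I would fix a small constant $\epsilon > 0$ with $\epsilon < 1/(2\alpha)$ and set $p(N) = N^{-1/(2\alpha) + \epsilon}$, so that $p(N) N^{1/(2\alpha)} = N^{\epsilon} \to \infty$. Theorem~\ref{thm:main}(i) then produces $|\Af| \geq c_0 N$ a.a.s.\ for some constant $c_0 > 0$. On the other hand, a standard Chernoff bound on the binomial $|\Ao|\sim\bin(N,p(N))$ gives $|\Ao| \leq 2 p(N) N = O(N^{1 - 1/(2\alpha) + \epsilon}) = o(N)$ a.a.s., where we used that $1 - 1/(2\alpha) < 1$ since $\alpha > 1/2$.

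The key structural observation is that by the very definition of bootstrap percolation with activation threshold $\r$, every vertex $v \in \Af \setminus \Ao$ has at least $\r$ neighbors inside $\Af$ --- namely, the $\r$ neighbors whose infection triggered its own. Hence in the induced subgraph $\G(N;\alpha,\nu,\rho)[\Af]$, every vertex outside the $o(N)$-sized set $\Ao \cap \Af$ already has degree at least $\r$. The $\r$-core of this induced subgraph is contained in the $\r$-core of $\G(N;\alpha,\nu,\rho)$, so it suffices to show that the standard peeling algorithm (iteratively remove vertices of current degree $<\r$) applied to $\G(N;\alpha,\nu,\rho)[\Af]$ removes at most $o(N)$ vertices. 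To control the cascade, I would assign to each $v \in \Af \setminus \Ao$ a witness set $W_v$ of exactly $\r$ neighbors infected strictly before $v$, giving an acyclic digraph of uniform out-degree $\r$ whose roots lie in $\Ao \cap \Af$; a vertex can be peeled only after at least one of its witnesses has been peeled, so the peeled set is contained in the ancestor set of $\Ao \cap \Af$ in the reverse DAG.

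The main obstacle is the quantitative bound on this cascade: a naive branching-process comparison yields $\r^{\text{depth}}$, which is useless, so one needs to exploit the geometric slack of $\G(N;\alpha,\nu,\rho)$. The right ingredient should come directly from the proof of Theorem~\ref{thm:main}(i), which builds a dense central core near the circumference of radius $R/(2\alpha)$ whose internal minimum degree vastly exceeds $\r$ even after the $\rho$-thinning. Because this core is contained in $\Af$ and most vertices of $\Af$ are reached from it through abundant infected neighborhoods rather than barely $\r$ of them, the typical slack in the witness DAG is much larger than the out-degree $\r$, so the cascade dies out within $O(|\Ao|)$ peelings. Assembling these pieces, one recovers a subgraph of minimum degree at least $\r$ and size $|\Af| - o(N) = \Omega(N)$, which proves $\liminf_{N \to \infty} c_\r(\G(N;\alpha,\nu,\rho))/N > 0$ a.a.s.
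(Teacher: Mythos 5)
The paper disposes of this statement in one line: the set $\mathcal{K}_{\r}(\mathcal{B}_0)$ built in the proof of Theorem~\ref{thm:main}(i) has, by construction, minimum degree at least $\r$. Indeed, the base band $\mathcal{B}_0$ induces $G(N_0,\rho)$ with $N_0 = \Omega(N^{1-\alpha}) \to \infty$, whose minimum degree a.a.s.\ is $\Theta(\rho N_0) \gg \r$, and every vertex from $\mathcal{B}_i$ ($i\geq 1$) that is counted in $N_i'$ is, by the very definition of $\mathbf{B}_j^{(i)}$ and of $N_i'$, required to keep at least $\r$ retained edges to vertices in the previous band. So the $\Omega(N)$ vertices from (\ref{eq:sum_N_i'}) already form a subgraph of minimum degree $\geq \r$. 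No separate peeling argument is needed.

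Your proposal takes a genuinely different route, arguing from general properties of the final set $\Af$ rather than from the specific structure built inside the proof. The idea is tempting, but there is a real gap at the cascade step, and you identify it yourself without closing it. Knowing that every $v \in \Af \setminus \Ao$ has at least $\r$ neighbors in $\Af$ gives no useful control over the peeling process: if many vertices have \emph{exactly} $\r$ neighbors in $\Af$, then peeling a single witness already kills $v$, and the witness DAG (out-degree $\r$, roots in $\Ao$) can have ancestor sets of size $\Omega(N)$ even when $|\Ao| = o(N)$. A binary-tree-like witness structure is the obstruction. Your proposed repair — that "geometric slack" makes the typical in-degree in $\Af$ much larger than $\r$ so the cascade is $O(|\Ao|)$ — is a heuristic, not an argument; making it rigorous would require a quantitative lower bound on the number of infected neighbors of a typical vertex of $\Af$ and a careful analysis of the peeling dynamics, which in effect would reproduce the paper's explicit band-by-band construction. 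Given that construction already yields a subgraph of minimum degree $\r$ directly, the detour through $\Af$ and its peeling is both harder and unnecessary.
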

In other words, Theorem~\ref{cor:core} implies that the $\r$-core of $G(N;\alpha,\nu)$ is robust under random edge deletions. 

The proof of Theorem~\ref{thm:main} is based on an inductive argument, and it is spread over Section \ref{sect:base_step} (the base step), and Section \ref{sect:inductive_step} (the inductive step).
Finally, in Section \ref{sect:proof_corollaries} we show the proofs of Corollary~\ref{cor:giant} and Theorem~\ref{cor:core}.

\paragraph{Acknowledgments.}
The authors are grateful to Peter M\"orters for suggesting the problem of the robustness of the giant component 
under random edge deletions.

\section{Preliminaries}
Throughout the paper, we will be working with the notion of the \emph{type} of a vertex, rather than its distance from the origin: denoting by $r_u$ the distance of vertex $u$ from the origin, its type is defined as $t_u:=R-r_u$.

It is not hard to show that the type of a vertex follows the exponential distribution with parameter $\alpha$.  
More specifically, it follows from (\ref{eq:pdf}) that for any $c<1$, uniformly over $t_u < c \cdot R$, we have 
\begin{equation}\label{eq:rho_approx}
\bar{\rho} (t_u) := \rho(r_u)= \alpha e^{-\alpha t_u}\bigl ( 1-o(1)\bigr ).
\end{equation}
We will use this asymptotic equality several times in our proofs, a proof of this easy fact can be found in~\cite{candellero_fountoulakis}. 
The above expression implies  that the probability that $t_u \geq R/(2 \alpha ) + \omega (N)$ is $o(1/N)$, provided that $\omega (N) \rightarrow \infty$ as $N \rightarrow \infty$. Therefore, a.a.s. all vertices have type that is bounded by $R/(2 \alpha ) + \omega (N)$,
where $\omega (N)$ can be any slowly growing function that tends to infinity.

\subsection{Distances on the hyperbolic plane}
We will need a general tool that will allow us to deal with distances on the hyperbolic plane (because these characterize 
whether or not two vertices are adjacent). 
The following lemma provides an \emph{almost characterization} for two points $u,v$ to have hyperbolic distance less than $R$ in terms of 
their types and their relative angles in $\D$. This is a key lemma whose proof is based on the hyperbolic law of cosines and allows 
us to estimate the probability that two vertices are adjacent. 
More specifically, assuming that the points have types $t_u$ and $t_v$, respectively, this condition is effectively 
an upper bound on the relative angle $\theta_{u,v}$ between $u$ and $v$ so that $d(u,v) < R$.  
\begin{Lemma}\label{lem:relAngle}
For any $\eps>0$ there exists an $N_0>0$ and a $c_0>0$ such that for any $N>N_0$ and $u,v\in\D$ with $t_u+t_v < R-c_0$
the following hold.
\begin{itemize}
			\item If $\theta_{u,v} < 2 (1-\eps )\exp\left(\frac{1}{2}(t_u+t_v-R)\right)$, then $d(u,v) < R$.
			\item If $\theta_{u,v}> 2(1+\eps )\exp\left(\frac{1}{2}(t_u+t_v- R)\right)$,
				then $d(u,v) > R$.
		\end{itemize}
\end{Lemma}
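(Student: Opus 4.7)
The plan is to apply the hyperbolic law of cosines and then control the error terms in the resulting exponential approximations.

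First, recall that for $u,v \in \D$ with polar coordinates $(r_u, \phi_u)$ and $(r_v, \phi_v)$, the hyperbolic law of cosines gives
\[ \cosh d(u,v) = \cosh r_u \cosh r_v - \sinh r_u \sinh r_v \cos \theta_{u,v}. \]
Applying the identities $\cosh(a-b) = \cosh a \cosh b - \sinh a \sinh b$ and $1 - \cos\theta = 2\sin^2(\theta/2)$, this rearranges to
\[ \cosh d(u,v) = \cosh(r_u - r_v) + 2 \sinh r_u \sinh r_v \sin^2(\theta_{u,v}/2), \]
so that $d(u,v) < R$ is equivalent to
\[ 2 \sin^2(\theta_{u,v}/2) \;<\; \frac{\cosh R - \cosh(r_u - r_v)}{\sinh r_u \sinh r_v}. \]

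Next, I would substitute $r_u = R - t_u$, $r_v = R - t_v$ and estimate each hyperbolic term by its dominant exponential. Since $|t_u - t_v| \leq t_u + t_v < R - c_0$, one has $\cosh(r_u - r_v) = \cosh(t_u - t_v) \leq \tfrac{1}{2}e^{R - c_0}(1+o(1))$, while $\cosh R = \tfrac{1}{2}e^R(1 + o(1))$ and $\sinh(R - t_w) = \tfrac{1}{2}e^{R-t_w}(1 + o(1))$ for $w \in \{u,v\}$; the $o(1)$ terms vanish as $N \to \infty$. Substituting these estimates and simplifying gives
\[ \frac{\cosh R - \cosh(r_u - r_v)}{\sinh r_u \sinh r_v} \;=\; 2\, e^{t_u + t_v - R}\bigl(1 + O(e^{-c_0}) + o(1)\bigr). \]

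Finally, since $\theta_{u,v}$ must itself be small in the relevant range (of order $e^{(t_u+t_v-R)/2} \leq e^{-c_0/2}$), I would linearize via $\sin(\theta/2) = (\theta/2)(1 + O(\theta^2))$ and take square roots, obtaining
\[ d(u,v) < R \iff \theta_{u,v} \;<\; 2\, e^{(t_u+t_v-R)/2}\bigl(1 + O(e^{-c_0}) + o(1)\bigr). \]
Given $\eps > 0$, one then chooses $c_0$ large enough (depending only on $\eps$) so that the $O(e^{-c_0})$ term is smaller than $\eps/2$, and finally picks $N_0$ large enough that the $o(1)$ from $R \to \infty$ is absorbed, yielding both implications in the statement.

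The routine but main bookkeeping obstacle is tracking the two distinct sources of error uniformly over $t_u, t_v$: one of size $o(1)$ from the finiteness of $R$, and one of size $O(e^{-c_0})$ coming from neglecting $\cosh(t_u - t_v)$ against $\cosh R$ and from the Taylor expansion of $\sin$. The hypothesis $t_u + t_v < R - c_0$ is precisely what makes this second error uniformly small, which is why $c_0$ appears in the statement.
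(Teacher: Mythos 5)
Your proof is correct and follows essentially the same route as the paper: both start from the hyperbolic law of cosines, use the identity $1-\cos\theta = 2\sin^2(\theta/2)$, approximate $\cosh$ and $\sinh$ by their dominant exponentials, and exploit the hypothesis $t_u+t_v<R-c_0$ to make the resulting errors uniformly small in $c_0$. The one organizational improvement you make is to invoke $\cosh(r_u-r_v)=\cosh r_u\cosh r_v-\sinh r_u\sinh r_v$ to rewrite the condition $d(u,v)<R$ as a single exact equivalence $2\sin^2(\theta_{u,v}/2)<\bigl(\cosh R-\cosh(r_u-r_v)\bigr)/\bigl(\sinh r_u\sinh r_v\bigr)$ and then estimate one ratio, whereas the paper bounds $\cosh d(u,v)$ above and below separately against $\tfrac{1}{2}e^R$; your version is slightly tidier but uses the same ingredients and error bookkeeping.
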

\begin{proof}
We begin with the hyperbolic law of cosines: 
\[
\cosh ( d(u,v))  = 
\cosh ( R- t_u) \cosh (R-t_v) - \sinh (R- t_u) \sinh (R-t_v) 
\cos ( \theta_{u,v} ).
\]
The right-hand side of the above becomes:
\begin{equation} \label{eq:coslaw} 
\begin{split} 
	&\cosh (R- t_u)\cosh (R-t_v) - \sinh (R- t_u ) \sinh (R-t_v ) \cos ( \theta_{u,v} ) \\
	& = {e^{ (2R- (t_u + t_v))} \over 4} \left[ \left(1+e^{-2 (R-t_u)}\right)\left(1+e^{-2 (R-t_v)}\right) \right .\\
	& \quad \left . -\left(1-e^{-2 (R-t_u)}\right)\left(1-e^{-2 (R-t_v)}\right) \cos (\theta_{u,v}) \right] \\
	& = {e^{ (2R- (t_u + t_v))} \over 4} \left[1- \cos (\theta_{u,v}) + \left(1+ \cos (\theta_{u,v}) \right) 
	\left( e^{-2 (R-t_u)} +  e^{-2 (R-t_v)}\right) \right . \\
	& \quad \left . + O\left( e^{-2 (2R- (t_u + t_v))}\right)\right].
\end{split}
\end{equation}
Therefore,
\begin{equation*}
	\begin{split}
    &\cosh ( d(u,v)) \leq \\
	&  {e^{ (2R- (t_u + t_v))} \over 4} \left[1- \cos (\theta_{u,v}) + 2 
	\left( e^{-2 (R-t_u)} +  e^{-2 (R-t_v)}\right)  + O\left( e^{-2 (2R- (t_u + t_v))}\right)\right].
	\end{split}
\end{equation*}
Since $t_u+t_v < R-c_0$, the last error term is $O(N^{-4})$. 
Also, it is a basic trigonometric identity that $1- \cos (\theta_{u,v}) = 2\sin^2 \left( {\theta_{u,v} \over 2} \right)$. 
The latter is at most ${\theta_{u,v}^2 \over 2}$. 
 Therefore, the upper bound on $\theta_{u,v}$ yields:
  \begin{equation*}  
\begin{split}
\cosh ( d(u,v)) &\leq {e^{ (2R- (t_u + t_v))} \over 4} \left( {\theta_{u,v}^2 \over 2} + 2 
	\left( e^{-2 (R-t_u)} +  e^{-2 (R-t_v)}\right) + O\left( {1\over N^4}\right)\right) \\
	&\leq {e^{ (2R- (t_u + t_v))} \over 4} \left( 2(1-\eps )^2 e^{ (t_u+t_v - R)} + 2 
	\left( e^{-2 (R-t_u)} +  e^{-2 (R-t_v)}\right) \right) + O\left( 1\right) \\
	&= (1-\eps)^2{e^{ R} \over 2} + {1\over 2}\left(e^{ ( t_u-t_v )} + e^{ (t_v-t_u)} \right) + O(1).
	\end{split}
 \end{equation*}  
At this point we choose $c_0$ such that $e^{-c_0}<\frac{\eps}{2}$, hence the above is bounded from above by
\[
(1-\eps)^2{e^{ R} \over 2} + \eps {1\over 2}\left(2e^{ ( t_u+t_v )}  \right) + O(1) < {e^{ R} \over 2},
\]
for $N$ large enough, since $t_u+t_v <  R-c_0$ and $t_u,t_v\geq0$.  
Also, since $\cosh ( d(u,v))> {1\over 2} e^{ d(u,v)}$, it follows that $d(u,v) < R$. 

To deduce the second part of the lemma, we consider a lower bound on (\ref{eq:coslaw}) using the lower bound on $\theta_{u,v}$: 
\begin{equation} \label{eq:coslawLow} 
    \begin{split} 
     \cosh ( d(u,v)) & \geq   {e^{ (2R- (t_u + t_v))} \over 4} \left(1- \cos (\theta_{u,v}) \right) +O(1) \\
     & \geq {e^{ (2R- (t_u + t_v))} \over 4} \left(1- \cos \left(2(1+\eps){e^{\frac{1}{2} (t_u+t_v- R)}}\right) \right)
+O(1). 
    \end{split}
\end{equation}
Using again that $1- \cos (\theta) = 2\sin^2 \left( {\theta  \over 2} \right)$ we deduce that 
\[
1- \cos \left( 2(1+\eps)e^{\frac{1}{2}(t_u+t_v - R)}\right)  = 2 \sin^2 \left( {1\over 2}~2(1+\eps)e^{\frac{1}{2}(t_u+t_v - R)}\right).
\]
Since $t_u+t_v < R - c_0$, it follows that $t_u+t_v-  R <- c_0$. So the latter is 
\[
\sin \left( (1+\eps)e^{\frac{1}{2}(t_u+t_v - R)}\right)\geq \frac{(1+\eps)e^{\frac{1}{2}(t_u+t_v - R)}}{2},
\]
for $N$ and $c_0$ large enough.
Substituting this bound into (\ref{eq:coslawLow}) we have 
\[
\cosh ( d(u,v))  \geq 
2\left (\frac{(1+\eps)e^{\frac{1}{2}(t_u+t_v - R)}}{2}\right )^2+O(1) 
= (1+\eps)^2\frac{e^{(t_u+t_v - R)}}{2}+O(1).
\]
Thus, if $d(u,v) \leq  R$, the left-hand side would be smaller than the right-hand side which would lead to a contradiction. 
\end{proof}

\subsection{Sketch of proof and the setup of the induction argument}\label{sect:C2_from_other_paper}
The proof of Theorem \ref{thm:main} relies on an inductive argument which shows
\begin{enumerate}
\item[1.] that if $p\gg N^{-1/(2\alpha)}$, then a.a.s.\ all vertices of type at least $R/2$ (which we call the \emph{core vertices})
become infected; 
\item[2.] how the infection spreads to the remaining vertices.
\end{enumerate}
To implement the second part, we divide the disk $\D$ into homocentric \emph{bands} and effectively show that \emph{if most of the vertices of a band are infected}, then this is also the case for the next band. This is the inductive step. 
The partition is defined as follows. We set $t_0:=R/2$ and for $i > 0$,
\begin{equation}\label{eq:def_ti}
t_i-2\ln\left(\frac{4\pi}{\nu(1-\varepsilon)^4} t_i\right)=\lambda t_{i-1},
\end{equation}
where $\lambda:=2\alpha-1$. Since $1/2 < \alpha < 1$, we have $0 < \lambda < 1$.
We set
\[
\mathcal{B}_0 := \{v\in\mathcal{D}_R \ : \ R/2<t_v\leq R\},
\]
and for $i>0$
\[
\mathcal{B}_i:= \{v \in \mathcal{D}_R \ : \ t_i\leq t_v < t_{i-1}\}.
\]
We shall restrict our analysis to $i < T$, where the value $T$ will be determined explicitly in Appendix~\ref{sect:step1}.
In particular, we have the following result whose proof is rather technical and hence deferred to Appendix \ref{sect:step1}.
\begin{Lemma}\label{lemma:T=O(lnR)}
The number of bands $T $ is of order $ O \left( \ln R \right)$.
\end{Lemma}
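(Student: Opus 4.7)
My plan is to analyze the recursion \eqref{eq:def_ti} directly and show that $t_i$ decays geometrically in $i$ at rate $\lambda=2\alpha-1\in(0,1)$, up to an additive $O(\ln R)$ correction that is negligible compared to $t_0 = R/2$. Since $T$ is the index at which $t_i$ first drops below the (constant) threshold at which the inductive step of Theorem~\ref{thm:main} ceases to apply, a geometric decay of this form immediately yields $T = O(\ln R)$ with constant $1/\ln(1/\lambda)$.

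First, I would rewrite \eqref{eq:def_ti} as $t_i = \lambda t_{i-1} + 2\ln(C' t_i)$, where $C' = 4\pi/[\nu(1-\varepsilon)^4]$. Because the right-hand side involves $t_i$ itself, I would begin by observing that the map $\phi(t):= t - 2\ln(C' t)$ satisfies $\phi'(t) = 1 - 2/t > 0$ for $t > 2$, so $\phi$ is strictly increasing and admits an inverse on that range. Hence $t_i = \phi^{-1}(\lambda t_{i-1})$ is well-defined whenever $\lambda t_{i-1}$ lies in the image of $\phi$ restricted to $t > 2$. A short check shows $\phi(t_{i-1}) > \lambda t_{i-1}$ (equivalently $(1-\lambda) t_{i-1} > 2\ln(C' t_{i-1})$) as soon as $t_{i-1}$ exceeds a constant $t_*$ depending only on $\lambda$ and $C'$, so the sequence is strictly decreasing: $t_i < t_{i-1}$. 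In particular $t_i \le R/2$ for every $i$, which gives $2\ln(C' t_i) \le 2\ln R + O(1)$, and substituting into the recursion produces the linear bound
\begin{equation*}
t_i \;\le\; \lambda t_{i-1} + 2\ln R + O(1).
\end{equation*}
Iterating this yields
\begin{equation*}
t_i \;\le\; \lambda^i\cdot\frac{R}{2} \;+\; \bigl(2\ln R + O(1)\bigr)\sum_{j=0}^{i-1}\lambda^j \;\le\; \lambda^i\cdot\frac{R}{2} \;+\; \frac{2\ln R + O(1)}{1-\lambda}.
\end{equation*}
Since $T$ is defined to be the first index with $t_T \le t_*$ (a constant), the inequality above shows it suffices to take $\lambda^i R/2 \le t_*/2$, giving $i \;\ge\; \ln(R)/\ln(1/\lambda) + O(1)$. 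Therefore $T = O(\ln R)$, as claimed.

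The delicate point will be bookkeeping the well-posedness of the recursion throughout $0 \le i < T$: one must verify that $t_{i-1}$ never drops into the regime where $\phi$ fails to be invertible (or where the rough bound $\phi(t_{i-1}) > \lambda t_{i-1}$ fails). But this is precisely guaranteed by the choice of $T$ as the threshold at which $t_i$ leaves the regime where the inductive step of the main argument is meaningful, so no separate work is needed. Apart from this consistency check, everything reduces to iterating a contracting affine recurrence with an additive logarithmic perturbation, and the geometric factor $\lambda^i$ dominates the accumulated correction because $\lambda < 1$ is bounded away from $1$ uniformly in $N$.
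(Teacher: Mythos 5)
Your proposal has a genuine gap in the final step. After iterating the crude recursion $t_i \le \lambda t_{i-1} + 2\ln R + O(1)$, you obtain
\[
t_i \;\le\; \lambda^i\,\frac{R}{2} \;+\; \frac{2\ln R + O(1)}{1-\lambda},
\]
and the second term is $\Theta(\ln R)$, not a constant. So for $i$ large enough that $\lambda^i R/2 \le t_*/2$, you have only shown $t_i \le t_*/2 + \Theta(\ln R)$, which is of order $\ln R$ and does \emph{not} imply $t_i \le t_*$. Since $T$ is the first index at which $t_i$ drops below a fixed constant $C$ (it is $\min\{T_1,T_2\}$ in the paper, with $T_1 = \min\{i: t_i < C\}$), your bound never shows that threshold is reached: the accumulated additive correction overwhelms the geometric part precisely when the geometric part becomes small. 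Your framing that the $O(\ln R)$ correction is ``negligible compared to $t_0=R/2$'' is true, but what matters is that it is \emph{not} negligible compared to $C$.

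The paper avoids this by establishing a purely \emph{multiplicative} contraction rather than an affine one. Using the condition on $C$ that guarantees $t_i \ge \lambda C$ throughout (so $t_i$ never drops below a fixed constant before $T$), and the monotonicity of $x\mapsto \tfrac{2}{x}\ln\bigl(\tfrac{4\pi}{\nu(1-\ve)^4}x\bigr)$, one has $2\ln\bigl(\tfrac{4\pi}{\nu(1-\ve)^4}t_i\bigr) < (1-\alpha)\,t_{i-1}$, hence
\[
t_i = \lambda t_{i-1} + 2\ln\Bigl(\frac{4\pi}{\nu(1-\ve)^4}t_i\Bigr) < \bigl(\lambda + 1-\alpha\bigr)\,t_{i-1} = \alpha\, t_{i-1},
\]
so $t_i < \alpha^i R/2$ with no leftover additive term, and $T = O\bigl(\log_{1/\alpha}(R/C)\bigr) = O(\ln R)$ follows at once. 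Your approach can also be rescued, but requires a multi-scale iteration you do not perform: once you know $t_i = O(\ln R)$, rerun the same argument with $2\ln(C' t_i) = O(\ln\ln R)$ to drop to $O(\ln\ln R)$ in another $O(\ln\ln R)$ steps, and so on; the total is still $O(\ln R)$, but this must be said explicitly. As written, the proposal does not conclude.
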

Also, for $i\geq 0$ denote by $\mathcal{C}_i$ the circle centered at the origin $O$ that has radius $R-t_i$. 
Hence $\mathcal{B}_i$ is delimited by $\mathcal{C}_i $ and $ \mathcal{C}_{i-1}$.

We will use some standard concentration inequalities to show that the number of vertices in each band is concentrated 
around its expected value. Once we have established this, we will condition on the sets of vertices that belong to each band. 

Let $\mathcal{N}_i$ denote the set of vertices in $\V$  that belong to $\mathcal{B}_i$, and let $N_i:=|\mathcal{N}_i|$.
\begin{Claim} 
We have 
\[
\mathbb{E} \left[ N_0 \right] = \Omega (N^{1- \alpha}).
\]
\end{Claim}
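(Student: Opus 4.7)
The plan is to compute $\mathbb{E}[N_0]$ directly by linearity of expectation. Since the $N$ vertices are i.i.d., we have $\mathbb{E}[N_0] = N \cdot \mathbb{P}(t_v > R/2)$, where $v$ is any single sampled vertex. Recalling that $t_v = R - r_v$, the event $t_v > R/2$ corresponds to $r_v < R/2$. Using the density $\rho_N$ from (\ref{eq:pdf}), this probability is
\[
\mathbb{P}(r_v < R/2) = \int_0^{R/2} \alpha \frac{\sinh(\alpha r)}{\cosh(\alpha R)-1}\, dr = \frac{\cosh(\alpha R/2)-1}{\cosh(\alpha R)-1}.
\]

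Next I would extract the leading-order asymptotics. For large $R$ we have $\cosh(\alpha R) = \tfrac{1}{2}e^{\alpha R}(1+o(1))$ and $\cosh(\alpha R/2) = \tfrac{1}{2}e^{\alpha R/2}(1+o(1))$, so the ratio above equals $e^{-\alpha R/2}(1+o(1))$. Plugging in the relation $N = \nu e^{R/2}$ (equivalently $e^{-\alpha R/2} = (\nu/N)^{\alpha}$) gives $\mathbb{P}(r_v < R/2) = \nu^{\alpha} N^{-\alpha}(1+o(1))$, and hence
\[
\mathbb{E}[N_0] = N \cdot \nu^{\alpha} N^{-\alpha}(1+o(1)) = \Omega(N^{1-\alpha}),
\]
as claimed.

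There is no real obstacle here; the statement is a one-line consequence of integrating the radial density and applying the parameterisation $N \asymp e^{R/2}$. The only point that requires minor care is ensuring that the leading term $\cosh(\alpha R/2)/\cosh(\alpha R) \sim e^{-\alpha R/2}$ really does dominate the $-1$ corrections in numerator and denominator, which is immediate once $R \to \infty$. One could alternatively derive the same bound from the asymptotic $\bar{\rho}(t) \asymp \alpha e^{-\alpha t}$ in (\ref{eq:rho_approx}) by integrating over $t \in (R/2, R-\omega(N))$, since the contribution from $t \in (R-\omega(N), R]$ is negligible by the remark following (\ref{eq:rho_approx}); this avenue gives the same $\Theta(N^{-\alpha})$ bound on $\mathbb{P}(t_v > R/2)$.
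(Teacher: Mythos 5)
Your proof is correct, and it takes a slightly different route from the paper's. The paper lower-bounds $\mathbb{E}[N_0]$ by applying the asymptotic $\bar\rho(t) = \alpha e^{-\alpha t}(1-o(1))$ from (\ref{eq:rho_approx}) -- which is only guaranteed uniformly for $t < cR$ with $c<1$ -- and therefore truncates the integration range to $t \in [R/2, 3R/4]$, obtaining $\mathbb{E}[N_0] \geq (1-o(1)) N (e^{-\alpha R/2} - e^{-3\alpha R/4}) = \Omega(N^{1-\alpha})$. You instead integrate the exact density $\rho_N$ over the full radial range $[0, R/2]$, getting the closed form $(\cosh(\alpha R/2) - 1)/(\cosh(\alpha R) - 1)$, and then take asymptotics; this is clean, avoids the uniformity caveat of (\ref{eq:rho_approx}), and actually pins down $\mathbb{E}[N_0] \sim \nu^\alpha N^{1-\alpha}$, i.e.\ a $\Theta$-bound with explicit leading constant, which is stronger than the $\Omega$-statement required. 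One small caution on your closing aside: integrating (\ref{eq:rho_approx}) over $t \in (R/2, R - \omega(N))$ is not directly justified, since that range goes beyond $cR$ for every fixed $c<1$; one would truncate at $cR$ as the paper does (or argue the tail separately). This does not affect your main argument.
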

\begin{proof}
We use (\ref{eq:rho_approx}) and deduce that 
\begin{equation*}
\mathbb{E} \left[ N_0 \right] \geq (1-o(1)) N \alpha \int_{t_0}^{3R/4} e^{-\alpha t} dt 
= (1-o(1)) N \left( e^{-\alpha R/2} - e^{-3\alpha R /4} \right) = \Omega (N^{1-\alpha}). 
\end{equation*}
\end{proof}
The next claim deals with $i>0$.  
\begin{Claim} \label{claim:Ni}
Let $\eps > 0$. If $T$ is such that for $i < T$
\begin{equation}\label{eq:T_2}
e^{-\alpha (t_{i-1}-t_i)}<\ve,
\end{equation}
then for any $N$ sufficiently large and for every $0<i<T$ we have 
\[
(1 - \eps )^2 N e^{-\alpha t_i}\leq \mathbb{E} \left[ N_{i} \right] \leq (1 + \eps )^2 N e^{-\alpha t_i}.
\]
\end{Claim}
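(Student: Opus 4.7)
The plan is to compute $\mathbb{E}[N_i]$ directly by integrating the density of the type variable over the band $\mathcal{B}_i$, then applying the asymptotic expansion (\ref{eq:rho_approx}) together with the hypothesis (\ref{eq:T_2}).

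First I would observe that since the sequence $(t_i)_{i\geq 0}$ defined by (\ref{eq:def_ti}) is (for large $R$) strictly decreasing with $t_0 = R/2$, all types in bands $\mathcal{B}_i$ for $i\geq 1$ satisfy $t_v < R/2$. In particular, $t_v \leq c R$ for $c = 1/2 < 1$, so the asymptotic expansion $\bar{\rho}(t) = \alpha e^{-\alpha t}(1 - o(1))$ from (\ref{eq:rho_approx}) applies \emph{uniformly} in $t$ on $[t_i, t_{i-1}]$. Since the $N$ vertices are i.i.d.\ with type density $\bar{\rho}$, linearity of expectation gives
\[
\mathbb{E}[N_i] \;=\; N \int_{t_i}^{t_{i-1}} \bar{\rho}(t)\, dt \;=\; (1 \pm o(1))\, N \int_{t_i}^{t_{i-1}} \alpha e^{-\alpha t}\, dt \;=\; (1 \pm o(1))\, N\bigl(e^{-\alpha t_i} - e^{-\alpha t_{i-1}}\bigr),
\]
where the $o(1)$ term is independent of $i$.

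Next I would factor the exponential difference:
\[
\mathbb{E}[N_i] \;=\; (1 \pm o(1))\, N\, e^{-\alpha t_i}\bigl(1 - e^{-\alpha(t_{i-1} - t_i)}\bigr).
\]
For the upper bound, simply drop the $(1 - e^{-\alpha(t_{i-1}-t_i)})$ factor (it is $\leq 1$), yielding $\mathbb{E}[N_i] \leq (1 + o(1)) N e^{-\alpha t_i}$, which is at most $(1+\ve)^2 N e^{-\alpha t_i}$ for all $N$ sufficiently large. For the lower bound, invoke hypothesis (\ref{eq:T_2}): $e^{-\alpha(t_{i-1} - t_i)} < \ve$, so $(1 - e^{-\alpha(t_{i-1} - t_i)}) > 1 - \ve$. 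Combined with the $(1 - o(1))$ factor from the density approximation, we get
\[
\mathbb{E}[N_i] \;\geq\; (1 - o(1))(1 - \ve)\, N\, e^{-\alpha t_i} \;\geq\; (1 - \ve)^2 N e^{-\alpha t_i}
\]
once $N$ is large enough that $1 - o(1) \geq 1 - \ve$.

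There is no real obstacle here: the argument is a two-line integration combined with the uniformity of (\ref{eq:rho_approx}). The only point deserving care is checking that (\ref{eq:rho_approx}) may indeed be applied uniformly on each band, for which the bound $t_{i-1} \leq t_0 = R/2$ is enough, and that the $o(1)$ in (\ref{eq:rho_approx}) is independent of $i$ (so the same large-$N$ threshold works for all $i < T$).
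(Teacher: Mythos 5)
Your proof is correct and follows essentially the same route as the paper: compute $\mathbb{E}[N_i]$ by integrating the type density using the uniform asymptotic (\ref{eq:rho_approx}), factor out $e^{-\alpha t_i}$, drop the remaining factor for the upper bound, and invoke hypothesis (\ref{eq:T_2}) for the lower bound. The only difference is that you spell out the uniformity of (\ref{eq:rho_approx}) a bit more explicitly, which the paper leaves as a parenthetical remark.
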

\begin{proof}
Note that by (\ref{eq:rho_approx}) (since $t_{i} \leq R/2$, for all $i>0$) we have uniformly for all $i >0$ 
\begin{equation} \label{eq:ExpNi} 
\mathbb{E} \left[ N_{i} \right] = (1-o(1)) N \alpha \int_{t_i}^{t_{i-1}} e^{-\alpha t} dt 
= (1-o(1)) N \left( e^{-\alpha t_i} - e^{-\alpha t_{i-1}} \right). \end{equation}
The upper bound follows trivially. 
For the lower bound we use (\ref{eq:T_2}) which together with (\ref{eq:ExpNi}) imply that for $N$ sufficiently large
\begin{equation} \label{eq:ExNiLow} 
\begin{split}  
\mathbb{E} \left[ N_{i} \right] = (1-o(1)) N e^{-\alpha t_i} \left( 1-  e^{-\alpha(t_{i-1}-t_i)}\right) 
\stackrel{(\ref{eq:T_2})}{>}  N e^{-\alpha t_i} (1-\ve)^2,
\end{split}
\end{equation}
which finishes the proof.
\end{proof}
For the sake of completeness, we recall here two classical results that will be used throughout the paper: \emph{Chernoff bounds} and the \emph{bounded-differences inequality}.

\paragraph{Chernoff bounds.}
Consider a binomial random variable $X\sim \bin (n,p)$. If $\mathbb{E} X = np \rightarrow \infty$, as $n \rightarrow \infty$,
then for every constant $\ell\geq 0$ we have
\begin{equation}\label{eq:chernoff}
\P(X\geq \ell) \geq 1-\exp{\left  ( 
-\frac{(\E X-\ell)^2}{2\E X} \right )}\geq 1-\exp{ \left ( -\frac{np}{4}\right )},
\end{equation}
provided that $n$ is large enough. 
Analogously, we have
\begin{equation}\label{eq:chernoff_up}
\P(X\geq 2\E X) \leq \exp{ \left ( -\frac{np}{8}\right )}.
\end{equation}
\paragraph{Bounded-differences inequality (Hoeffding-Azuma).}
Suppose $X_1,\ldots ,X_n$  is a collection of independent random variables, and $f:\mathbb{R}^n \rightarrow \mathbb{R}$ 
is a real-valued function. Let $c_1,c_2,\ldots,c_n$ satisfy 
\[
\sup_{x_1,x_2,\ldots,x_n,x_i'}\left | f(x_1,x_2,\ldots,x_i,\ldots,x_n)-f(x_1,x_2,\ldots,x_i',\ldots,x_n)\right |<c_i,
\]
for $1\leq i\leq n$. Now, let $ X:=f(X_1,X_2,\ldots,X_n)$. Then for every $0<\epsilon<1$ we have
\begin{equation}\label{eq:bdd_ineq}
\P\left (X<(1-\epsilon)\E X \right )\leq \exp{\left \{ -\frac{2\epsilon^2 (\E X)^2}{\sum_{i=1}^n c_i^2}\right \}}.
\end{equation}
Now we apply all these results to our setting, by showing the next lemma. 
\begin{Lemma}
For any $\varepsilon>0$ with high probability we have
\begin{equation}\label{eq:N_iFinalBounds}
 (1-\eps )^3 N e^{-\alpha t_i}\leq N_i \leq (1+\eps )^3 Ne^{-\alpha t_i}. 
\end{equation}
\end{Lemma}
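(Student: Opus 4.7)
For each $i$ the random variable $N_i$ counts the number of the $N$ i.i.d.\ points that fall into the band $\mathcal{B}_i$, so $N_i\sim\mathrm{Bin}(N,q_i)$ where $q_i:=\int_{t_i}^{t_{i-1}}\bar\rho(t)\,dt$ (with the obvious modification for $i=0$, integrating over $(R/2,R]$). By Claim~\ref{claim:Ni} we already have $(1-\eps)^2 Ne^{-\alpha t_i}\leq \E[N_i]\leq (1+\eps)^2 Ne^{-\alpha t_i}$ for $0<i<T$, and the analogous two-sided bound $\E[N_0]\asymp Ne^{-\alpha t_0}=\Theta(N^{1-\alpha})$ follows by the same computation used in the preceding claim together with the matching upper estimate $\E[N_0]\leq (1+o(1))N\alpha\int_{R/2}^{\infty}e^{-\alpha t}\,dt$. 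Hence it suffices to show that w.h.p.
\[
(1-\eps)\E[N_i]\leq N_i\leq (1+\eps)\E[N_i]\qquad\text{for all }0\leq i<T,
\]
because multiplying this by the $(1\pm\eps)^2$ factors from Claim~\ref{claim:Ni} gives exactly the $(1\pm\eps)^3$ bounds claimed in (\ref{eq:N_iFinalBounds}).

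To get the concentration, I apply a two-sided multiplicative Chernoff bound to each binomial $N_i$. Since $t_i\leq R/2$ for every $i<T$, we have $\E[N_i]\gtrsim Ne^{-\alpha R/2}\asymp N^{1-\alpha}\to\infty$ (using $\alpha<1$). Thus the lower tail in (\ref{eq:chernoff}) applied with $\ell=(1-\eps)\E[N_i]$ gives $\P(N_i<(1-\eps)\E[N_i])\leq\exp(-\eps^2\E[N_i]/2)$, and the standard upper-tail analogue of (\ref{eq:chernoff_up}) (derived identically, but at threshold $(1+\eps)\E X$ instead of $2\E X$) gives a bound of the same shape for the upper tail. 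Combining,
\[
\P\!\left(|N_i-\E[N_i]|\geq\eps\E[N_i]\right)\leq 2\exp(-c_\eps N^{1-\alpha})
\]
for a constant $c_\eps>0$ depending only on $\eps$ and $\alpha$.

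Finally I take a union bound over the bands. By Lemma~\ref{lemma:T=O(lnR)}, $T=O(\ln R)=O(\ln\ln N)$, so
\[
\P\!\left(\exists\,0\leq i<T:\;|N_i-\E[N_i]|\geq\eps\E[N_i]\right)\leq 2T\exp(-c_\eps N^{1-\alpha})=o(1),
\]
and (\ref{eq:N_iFinalBounds}) follows. The argument is essentially routine; the only thing to check is that the concentration survives the union bound, and this is exactly the point where the assumption $\alpha<1$ is being used: the smallest band expectation $\E[N_0]=\Theta(N^{1-\alpha})$ is still polynomially large in $N$, which easily dominates the $\log\log N$ penalty from summing over bands. (The bounded-differences inequality (\ref{eq:bdd_ineq}) could be used instead of Chernoff, applied to $N_i=\sum_{k=1}^N\mathbf{1}\{u_k\in\mathcal{B}_i\}$ with $c_k=1$; it produces the same rate $\exp(-\Omega(N^{1-\alpha}))$ and hence the same conclusion.)
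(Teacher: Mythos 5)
Your proposal matches the paper's argument essentially step for step: apply a Chernoff bound to $N_i\sim\bin(N,q_i)$ to get concentration around $\E[N_i]$ with error probability $\exp(-\Omega(N^{1-\alpha}))$, union bound over the $T=O(\ln R)$ bands, and then combine the $(1\pm\eps)$ concentration factor with the $(1\pm\eps)^2$ expectation bounds from Claim~\ref{claim:Ni} to obtain $(1\pm\eps)^3$. The alternative you mention via the bounded-differences inequality is also sound but was not needed here.
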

\begin{proof}
Applying the Chernoff bound (\ref{eq:chernoff}), 
and since $T = O \left( \ln R \right)$ (by Lemma \ref{lemma:T=O(lnR)}), 
a simple first-moment argument shows that with probability 
$1- \exp \left(- \Omega \left( N^{1- \alpha} \right) \right)$ we have 
\[
(1- \ve) \E [ N_i ]\leq N_i\leq (1+ \ve) \E [ N_i ],
\]
for all $0< i < T$. 
More precisely, for $0 < i < T$, define the event
\begin{equation} \label{eq:N}  
\mathcal{E}_i:=\bigl \{(1- \ve) \E [ N_i ]\leq N_i\leq (1+ \ve) \E [ N_i ]\bigr \}.
\end{equation}
Since $\P(\mathcal{E}_i) \geq 1-e^{- \Omega \left( N^{1- \alpha} \right)}$ uniformly for every $i<T$, then the probability of the event 
\[
\mathcal{E}:=\bigcap_{i=1}^{T-1}\mathcal{E}_i
\]
is bounded from below by
\begin{equation}\label{eq:prob_E}
\P \left (\mathcal{E} \right ) \geq \prod_{i=1}^{T-1}\left ( 1-e^{- \Omega \left( N^{1- \alpha} \right)} \right )
\geq 1-\sum_{i=1}^{T-1}e^{- \Omega \left( N^{1- \alpha} \right)} \geq 1-Te^{- \Omega \left( N^{1- \alpha} \right)} .
\end{equation}
Again, using Lemma \ref{lemma:T=O(lnR)}, one has $\P \left (\mathcal{E} \right )=1-o(1) $.
Note that when $\mathcal{E}$ occurs, then for any $N$ sufficiently large, by Claim~\ref{claim:Ni} and (\ref{eq:N}) for all $i=1,\ldots , T$ we have the statement.
\end{proof}
\begin{Remark}
Throughout the rest of the paper, we will be assuming that the event $\mathcal{E}$ has been realized, and hence that (\ref{eq:N_iFinalBounds}) holds.
\end{Remark}
Now we show an intermediate result about the degree distribution of a vertex $v\in \V$ conditional on its type.
For two random variables $X$ and $Y$ we write $X \preccurlyeq Y$ to denote that the random variable $Y$ \emph{stochastically dominates}
$X$.
\begin{Lemma}\label{lemma:deg_distrib}
Let $\mathbf{d}_v$ denote the degree of a vertex $v$ conditional on its type $t_v$.
There are two constants $0<H_1<H_2$ such that for any $N$ sufficiently large, if $t_v<R/2\alpha+\omega(N)$, then
we have
\[
\bin\left(N,H_1 \frac{e^{t_v/2}}{N}\right )\preccurlyeq \mathbf{d}_v \preccurlyeq \bin\left(N,H_2 \frac{e^{t_v/2}}{N}\right ).
\]
\end{Lemma}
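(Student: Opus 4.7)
My plan is to reduce the problem to estimating a single success probability $p_v$ for a binomial. Conditional on $t_v$, the other $N-1$ vertices of $\V$ are i.i.d.\ draws from the underlying distribution on $\D$, and each is adjacent to $v$ independently with the same probability $p_v := \P(d(u,v) < R \mid t_v)$. Hence $\mathbf{d}_v \mid t_v \sim \bin(N-1, p_v)$, and it suffices to show that $p_v \asymp e^{t_v/2}/N$ with constants depending only on $\alpha$ and $\nu$; the stochastic bounds then follow from the monotonicity of binomials in $p$ (the $N$ versus $N-1$ discrepancy is absorbed by slightly decreasing $H_1$ and increasing $H_2$).

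To estimate $p_v$ I would integrate over the type $t_u$ and the relative angle $\theta_{u,v}$, using that $\theta_{u,v}$ is uniform on $[0,\pi]$. The natural split is into a \emph{bulk} region $\{t_u : t_u + t_v < R - c_0\}$, where Lemma~\ref{lem:relAngle} gives
\[
\P(d(u,v) < R \mid t_u, t_v) = \frac{2}{\pi}\bigl(1+o(1)\bigr) e^{(t_u+t_v-R)/2},
\]
and a \emph{boundary} region $\{t_u : t_u + t_v \ge R - c_0\}$ where the lemma does not apply. In the bulk, combining this with (\ref{eq:rho_approx}) and integrating,
\[
\int_0^{R-c_0-t_v} \alpha e^{-\alpha t_u}\cdot \frac{2}{\pi}\, e^{(t_u+t_v-R)/2}\,dt_u \asymp e^{(t_v-R)/2} \asymp e^{t_v/2}/N,
\]
where the hypothesis $\alpha > 1/2$ makes the exponent $1/2-\alpha$ negative, so the $t_u$-integral is bounded by a constant depending only on $\alpha$, and $e^{-R/2} = \nu/N$ up to lower-order terms.

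For the boundary region I would use the trivial bound $\P(d(u,v) < R \mid t_u, t_v) \leq 1$ together with $\bar\rho(t_u) \lesssim \alpha e^{-\alpha t_u}$ to obtain a contribution of order $e^{-\alpha(R-c_0-t_v)} \asymp e^{\alpha t_v}/N^{2\alpha}$. Its ratio to the bulk estimate is $\asymp e^{(\alpha-1/2)(t_v-R)}/N^{2\alpha-1}$, which under the hypothesis $t_v < R/(2\alpha) + \omega(N)$ (with $\omega(N)$ growing slowly compared to $R$) tends to $0$ because $\alpha > 1/2$. This is the main bookkeeping I expect to do, namely tracking how the constraint on $t_v$ interacts with the exponent decay; once the boundary contribution is shown to be subdominant, the two estimates combine to give $p_v \asymp e^{t_v/2}/N$, as required.
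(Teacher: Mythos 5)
Your proposal is correct and follows essentially the same approach as the paper's: both estimate the per-pair edge probability by splitting the $t_u$-integral into a bulk region (where Lemma~\ref{lem:relAngle} yields the angular probability $\asymp e^{(t_u+t_v-R)/2}$) and a boundary tail bounded trivially by $1$, then observe that the boundary contribution is negligible because $\alpha > 1/2$, so that the conditional degree is binomial with success probability $\asymp e^{t_v/2}/N$. There is a harmless slip in your ratio expression, which should read $e^{(\alpha-1/2)t_v}/N^{2\alpha-1}$, or equivalently $e^{(\alpha-1/2)(t_v-R)}$, rather than $e^{(\alpha-1/2)(t_v-R)}/N^{2\alpha-1}$; this overcounts a decaying factor of $N^{-(2\alpha-1)}$ and so does not affect the conclusion.
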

\begin{proof}
For every vertex $u\in \V$, the probability that $u\sim v$ (conditional on $t_v$) can be bounded from above using Lemma \ref{lem:relAngle} and (\ref{eq:rho_approx}) as follows
\[
\P(u\sim v\mid t_v)\lesssim  \int_0^{R-t_v-C}\frac{e^{(t_u+t_v)/2}}{N}\bar{\rho}(t_u) dt_u + \P(t_u>R-t_v-C \ | \ t_v),
\]
where $C>0$ is an arbitrary large constant.
On the other hand, a lower bound is given by 
\[
\P(u\sim v\mid t_v)\gtrsim  \int_0^{R-t_v-C}\frac{e^{(t_u+t_v)/2}}{N}\bar{\rho} (t_u) dt_u.
\]
The first step of the proof consists in estimating the integral
\[ 
\mathbf{I_1}:=\int_0^{R-t_v-C}\frac{e^{(t_u+t_v)/2}}{N}\overline{\rho} (t_u)dt_u,
\] 
and subsequently in showing that the quantity 
\[
\mathbf{I_2}:=\P(t_u>R-t_v-C \ | \ t_v)
\]
satisfies
\begin{equation}\label{eq:I2=o(I1)}
\mathbf{I_2}=o(\mathbf{I_1}), 
\end{equation}
uniformly over $t_v$. 
We start with the first task, where we use (\ref{eq:rho_approx}) and write 
\[
\begin{split}
\mathbf{I_1} 
& = (1+o(1))\int_0^{R-t_v-C}\frac{e^{(t_u+t_v)/2}}{N}e^{-\alpha t_u}dt_u \\
& = (1+o(1))\frac{e^{t_v/2}}{N}\int_0^{R-t_v-C}e^{-(\alpha-1/2)t_u}dt_u 
\stackrel{\alpha>1/2}{\asymp} \frac{e^{t_v/2}}{N}.
\end{split}
\]
We can bound  $\mathbf{I_2}$ similarly:
\[
\begin{split}
\mathbf{I_2}
& \lesssim \int_{R-t_v-C}^R e^{-\alpha t_u} dt_u 
 \asymp e^{-\alpha(R-t_v-C)} \asymp e^{-\alpha(R-t_v)}e^{\alpha C} \asymp \left( \frac{e^{t_v/2}}{N} \right)^{2\alpha}.
\end{split}
\]
By direct comparison we see that (\ref{eq:I2=o(I1)}) holds, since $\alpha>1/2$.

It is now easy to see that the above must imply that there are two constants $0<H_1<H_2$ such that 
for any $N$ sufficiently large
\[
H_1\frac{e^{t_v/2}}{N} \leq \P(u\sim v\mid t_v) \leq H_2\frac{e^{t_v/2}}{N}.
\]
Therefore, since the (conditional) degree of $v$ is given by
\[
\bin \left (N,\P(u\sim v\mid t_v)\right ),
\]
for any $u\in \V$, then we have the statement.
\end{proof}
\begin{Remark}\label{remark:K_2>1}
Without loss of generality, we can choose the constant $H_2$ such that $H_2>1$.
\end{Remark}

Of particular interest will be what we call the \emph{light} degree of a vertex $v$. 
This is defined as follows: given a constant $C>0$ (that is to be specified later during our proof, cf.\ Appendix \ref{sect:step1}), the \emph{light degree} of vertex $v$ is the number of neighbors of $v$ that have type less than $C>0$.
To emphasize the dependence on $C$ we denote it 
by $\mathbf{d}_C (v)$. Arguing as in the proof of the above lemma we can show the following statement.  
\begin{Lemma}\label{lemma:deg_distrib_light}
Let $C>0$ and let $\mathbf{d}_C(v)$ be the light degree of a vertex $v$ conditional on its type $t_v$.
There are two constants $0<H_1'<H_2'$ that depend on $\alpha$ and $C$ such that for any $N$ sufficiently large, if
$t_v<R/2\alpha+\omega(N)$, then
we have
\[
\bin\left(N,H_1' \frac{e^{t_v/2}}{N}\right )\preccurlyeq \mathbf{d}_C (v) \preccurlyeq \bin\left(N,H_2' \frac{e^{t_v/2}}{N}\right ).
\]
\end{Lemma}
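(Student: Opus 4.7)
The plan is to mirror the proof of Lemma~\ref{lemma:deg_distrib} almost verbatim, but restricting the range of integration over $t_u$ to the bounded interval $[0,C]$. Conditional on $t_v$, the remaining $N-1$ vertices are placed i.i.d., so $\mathbf{d}_C(v) = \bin(N-1, q_v)$ where
$$q_v := \P\bigl( u \sim v \ \text{and}\ t_u < C \mid t_v\bigr).$$
The whole task reduces to showing $q_v \asymp e^{t_v/2}/N$ with constants depending only on $\alpha$ and $C$; the stated stochastic-domination by $\bin(N,\cdot)$ then follows immediately (the passage from $N-1$ to $N$ only costs an $o(1)$ adjustment of $H_1',H_2'$).

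First I would check that Lemma~\ref{lem:relAngle} is applicable uniformly in $t_u \in [0,C]$. Under the hypothesis $t_v < R/(2\alpha) + \omega(N)$ with $1/2 < \alpha < 1$, we have
$$t_u + t_v \leq C + R/(2\alpha) + \omega(N) = R - \tfrac{2\alpha-1}{2\alpha}R + C + \omega(N),$$
and since $R \to \infty$ while $\omega(N)$ grows only slowly, this stays below $R - c_0$ for $N$ large. Combining Lemma~\ref{lem:relAngle} with the uniform distribution of the relative angle on $(-\pi,\pi]$ and $e^{-R/2} = \nu/N$ then yields, uniformly in $t_u \in [0,C]$,
$$\P\bigl(u \sim v \mid t_u, t_v\bigr) \;\asymp\; \frac{e^{(t_u+t_v)/2}}{N}.$$

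Next I would integrate against $\bar\rho(t_u) = \alpha e^{-\alpha t_u}(1-o(1))$ from (\ref{eq:rho_approx}) to obtain
$$q_v \;=\; \Theta(1)\,\frac{e^{t_v/2}}{N}\,\int_0^{C} e^{-(\alpha - 1/2)\,t_u}\, dt_u.$$
Because $\alpha > 1/2$, the remaining integral equals the strictly positive constant $(1 - e^{-(\alpha - 1/2)C})/(\alpha - 1/2)$, which depends only on $\alpha$ and $C$. This yields the desired two-sided bound with explicit $\alpha,C$-dependent constants $0 < H_1' < H_2'$.

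There is no serious obstacle. The reason Lemma~\ref{lemma:deg_distrib} needed to bound a separate term $\mathbf{I_2}$ was to handle the tail $t_u > R - t_v - C$ on which Lemma~\ref{lem:relAngle} does not apply; in the light-degree setting the constraint $t_u < C$ trivially avoids that regime, so the computation collapses to a single finite, $(\alpha > 1/2)$-convergent integral. The only technical care needed is checking that $t_u + t_v$ remains uniformly below $R - c_0$, and this is precisely where the hypothesis $t_v < R/(2\alpha) + \omega(N)$ together with $\alpha > 1/2$ is used.
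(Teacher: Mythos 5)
Your proof is correct and follows exactly the route the paper has in mind: the paper gives no separate argument for this lemma, saying only ``arguing as in the proof of the above lemma,'' and your adaptation of Lemma~\ref{lemma:deg_distrib} is precisely that adaptation. You also correctly identify the one real simplification -- with $t_u < C$ there is no tail term $\mathbf{I_2}$ to control, and the hypothesis $t_v < R/(2\alpha)+\omega(N)$ together with $\alpha>1/2$ is exactly what guarantees Lemma~\ref{lem:relAngle} applies uniformly over the truncated range. (The $N-1$ versus $N$ remark is harmless; the paper itself already writes $\bin(N,\cdot)$ for the conditional degree, absorbing that into the constants.)
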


\section{Base Step for Induction}\label{sect:base_step}
In this section we show that under assumption $(i)$ (resp.\ $(ii)$) of Theorem~\ref{thm:main}, with high probability (resp.\ positive probability) all vertices in $\mathcal{B}_0$ become infected. This will be the base case for an inductive argument that will eventually show how the 
infection will spread from $\mathcal{B}_0$ to almost every vertex in $\mathcal{B}_i$, for all $i< T$. Finally, we will show that the 
number of these vertices is linear a.a.s. 


\subsection{The bootstrap percolation process inside $\mathcal{B}_0$}
Let us condition on having $N_0$ vertices in $\mathcal{B}_0$, where $N_0$ satisfies (\ref{eq:N_iFinalBounds}). 
Note that by the triangle inequality, any two of them are within distance at most $R$ and, therefore, they form a clique.

The random deletion of the edges incurs a random graph on $N_0$ vertices, whose edges appear independently with probability $\rho$. 
That is, this subgraph of $\G(N;\alpha,\nu, \rho)$ is distributed as the binomial random graph $G(N_0,\rho)$. 
Since $\rho$ is constant, $G(N_0,\rho)$ is connected a.a.s., and it will play the role of the ``seed" graph from which the bootstrap 
process evolves.

Now assume that we infect each vertex independently with probability $p=p(N)$, where $p N^{1/2\alpha} =\Omega(1)$ (as in 
Theorem~\ref{thm:main}$(i)$--$(ii)$). Let $\G_C$ denote the subgraph of $\G (N; \alpha, \nu , \rho)$ which is induced by the vertices of type less than $C$ together with the vertices of $\mathcal{B}_0$. The constant $C$ will be specified later in our proof. 
We will show that if $p(N)$ satisfies the conditions of Theorem~\ref{thm:main} \emph{(i)} and \emph{(ii)},  
then $\mathcal{B}_0$ becomes infected a.a.s., in the former case or with positive probability, in the latter case, through the subgraph
$\G_C$. More precisely, having shown this, we will further show that the infection spreads to most of the vertices of type at least $C$. 
Clearly, the two stages use disjoint sets of edges and we can expose them separately. 

We now proceed with the analysis of the first stage. 
By Lemma~\ref{lemma:deg_distrib_light}, conditional on the type $t_v$, a vertex $v$ has expected \emph{light} degree (before the random 
edge deletions) 
$ H_1' e^{t_v/2}\leq \E \mathbf{d}_C (v) \leq  H_2' e^{t_v/2}$, where $H_1', H_2'$ depend only on $\alpha$ and $C$. 
Thus, each vertex $v$ in the set
\begin{equation}\label{eq:tv}
\mathfrak{D}_1:=\left \{ u \ : \ \frac{R}{2\alpha}-\omega(N) \leq t_u \leq \frac{R}{2\alpha}+\omega(N)\right \},
\end{equation}
has (conditional) expected light degree
\[
\E \mathbf{d}_C (v) \geq H_1' \exp( t_v/2). 
\]
Hence, applying (\ref{eq:chernoff}) we obtain that 
\[
\P\left ( \mathbf{d}_C (v) \leq H_1'\frac{e^{t_v/2}}{2}\mid t_v\right )\leq e^{-\Theta \left ( e^{t_v/2} \right)},
\]
which tends to zero exponentially fast for $v \in \mathfrak{D}_1$, implying that in fact the conditional light degree of a vertex 
$v\in \mathfrak{D}_1$ is, with high probability, 
\[ \mathbf{d}_C (v) \geq H_1' e^{t_v/2}/2 \geq \frac{H_1'}{2}~\exp\left( \frac{1}{2}\left( \frac{R}{2\alpha}-\omega(N) \right) \right) 
\asymp N^{\frac{1}{2\alpha}} e^{- \omega(N)/2}. \]

Now assume we are in case $(i)$ of Theorem~\ref{thm:main}, and set $\varphi(N):= p(N)N^{1/2\alpha}$; hence $ \varphi(N)$ is some increasing function growing to infinity.
This implies that for any vertex in $\mathfrak{D}_1$, the (conditional) expected number of its neighbors that have type at most $C$ and are externally infected is at least (up to multiplicative constants) 
\[
p(N) \rho N^{\frac{1}{2\alpha}} e^{- \omega(N)/2} \asymp \varphi(N)e^{-\omega(N)/2}.
\]
Choosing $\omega(N)$ such that $ e^{\omega(N)}=o(\varphi(N))$ one has 
\[
\varphi(N) e^{-\omega(N)/2}\to \infty,
\]
which further implies that as $N\to \infty$ we have
\[
\P\bigl (v\in \mathfrak{D}_1\textnormal{ has at least }\r\textnormal{ initially infected neighbors}\bigr )\to 1.
\]
Also, it is not hard to see that a.a.s.\ $\mathfrak{D}_1$ contains at least $\r$ vertices. Let $v_1,\ldots, v_{\r}$ be an 
arbitrary collection of those.  Conditional on their degrees being as above, the FKG inequality implies that as $N \rightarrow \infty$ 
$$\P \bigl ( v_1,\ldots, v_{\r} \textnormal{ become infected} \bigr) \rightarrow 1.$$ 
(The FKG inequality is applied on the product space of initial infections, using the fact that the event that a vertex has at least 
$\r$ initially infected neighbors is increasing.)
In other words, the random graph $G(N_0,\rho)$ contains at least $\r$ vertices which become infected after round 1. 
Now, Theorem 5.8 in~\cite{janson_luczak_turova_vallier} implies that a.a.s. the bootstrap percolation process in $G(N_0,\rho)$ with 
this initial set of infected vertices results in complete infection of the vertices in $\mathcal{B}_0$.  

Similarly, now assume we are in case $(ii)$ and set $ \lim_{N\to \infty} \varphi (N)=\gamma >0$.
It is not hard to show that the set  
\begin{equation}\label{eq:tv2}
\mathfrak{D}_2:=\left \{ v \ : \ \frac{R}{2\alpha } \leq t_v\leq \frac{R}{2\alpha}+\omega(N)\right \},
\end{equation}
is non-empty with probability which remains bounded away from 0 as $N$ grows. In fact, the number of vertices therein follows asymptotically a Poisson distribution with 
constant parameter. 
In particular, looking at what happens asymptotically when $N\to \infty$, we have that with probability bounded away from $0$, it contains at least $\r$ vertices.
Conditional on their degree, the expected number of infected neighbors that have type less than $C$ of anyone of these vertices is
\[ \rho \varphi(N)\rightarrow \rho \gamma.
\]
Hence, as $N\to \infty$ the probability that a vertex 
$v\in \mathfrak{D}_2$ has at least $\r$ initially infected neighbors of type less than $C$ is bounded from below by some
positive constant.
Using again the FKG inequality, we deduce that with asymptotically positive probability these vertices become infected after 
the first round, and a.a.s. subsequently infect all vertices in $\mathcal{B}_0$. 

Therefore, from now in cases $(i)$ and $(ii)$ we will use these vertices as the \emph{root} of the infection for those vertices that 
have type at least $C$. 

We will denote by $\mathcal{K}_{\r}(\mathcal{B}_0)$ the connected
component induced by the infected vertices containing the (infected) vertices in $\mathcal{B}_0$. 
In our construction, we will inductively ``discover" a subgraph of $\mathcal{K}_{\r}(\mathcal{B}_0)$.

In particular, assume that we have discovered a certain set of vertices in $\mathcal{K}_{\r}(\mathcal{B}_0)$ contained in the union 
$\bigcup_{j=0}^{i-1}\mathcal{B}_j$. Our inductive step consists in proving that with high probability there are many vertices in
$\mathcal{B}_i$ which are connected to those in $\mathcal{K}_{\r}(\mathcal{B}_0)$ through at least $\r$ edges that are \emph{retained after the percolation process}.

\subsection{The base step starting from  $\mathcal{B}_0$}\label{sect:black_block_1}
In this section we show that the infection spreads from $\mathcal{B}_0$ to the external bands by showing that the core is \emph{very well connected} with vertices in the outer bands.
More precisely, we show that with very high probability 
%
outer vertices are contained in the disk of radius $R$ of at least $\r$ vertices which are located inside $\mathcal{B}_0$, 
and, moreover, at least $\r$ of such connections survive the percolation process.
Now we proceed with making this approach rigorous.

For $i\geq 1$, define
\begin{equation}\label{eq:def_theta^i} 
\theta^{(i)}:=2(1-\varepsilon ) e^{\frac{1}{2}(t_i+t_{i-1}-R)}. 
\end{equation}
Consider a point $u$ on $\mathcal{B}_{i-1}$. Lemma \ref{lem:relAngle} implies that all points $v \in \mathcal{B}_i$ with $\theta_{u,v} 
\leq \theta^{(i)}$ belong to the disk of radius $R$ that is centered at $u$. The set of these points is illustrated by the shaded area 
in Figure~\ref{fig:shadow}.

\begin{figure}[h!] \label{fig:shadow}
\begin{center}
\includegraphics[scale=0.7]{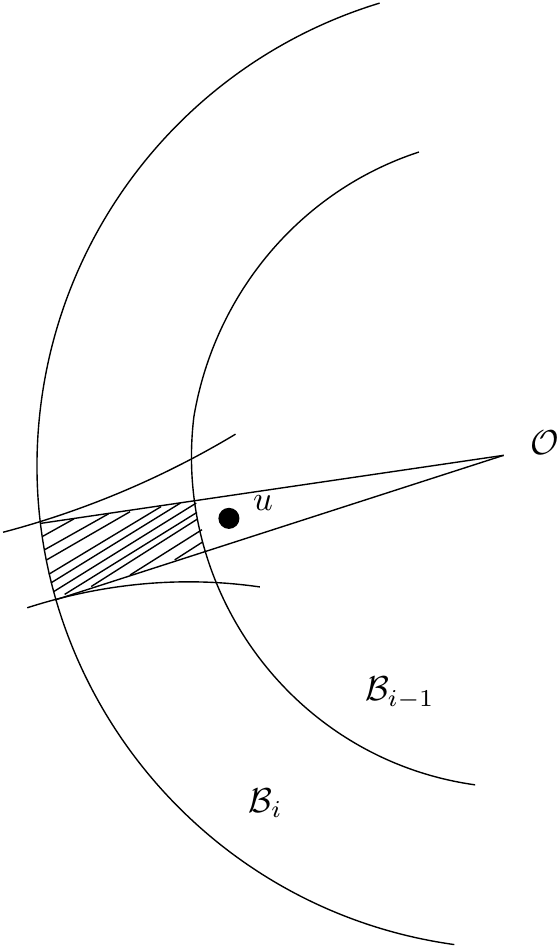}
\caption{Set of points $v \in \mathcal{B}_i$ with $\theta_{u,v} \leq \theta^{(i)}$: by Lemma~\ref{lem:relAngle}, they are all inside the disk of radius $R$ centered at $u$.}
\end{center}
\end{figure}

For every $i > 0$ we set
\begin{equation}\label{eq:def_Bi}
B_i:=\frac{\theta^{(i)}}{t_i}.
\end{equation}
Now consider the circle $ \mathcal{C}_1$ (i.e., the set of points of type $t_1$), divide it into consecutive \emph{blocks} of angle $B_1$,
and discard the (unique, if it exists) remaining block of angle smaller than $B_1$.

In what follows we shall be referring to these blocks as
\[
I_1^{(1)}, I_2^{(1)}, \ldots, I_{K_1}^{(1)},
\]
where the subscript is the index of the block (with $K_1:=\lfloor 2\pi/B_1\rfloor $) and the superscript denotes the index of the circle. 

For each $ j\in \{1, \ldots , K_1\}$, corresponding to each block $I_j^{(1)}$ we define a region (which will be called \emph{active area} -- 
cf.\ Figure~\ref{fig:active_areas}) $\mathcal{A}_j^{(1)} $ as follows:
\[
\mathcal{A}_j^{(1)}:= \{ x=(r,\theta) \ : \ R-t_1<r<R-t_0, \ \theta \in I_j^{(1)}\}.
\]

\begin{figure}[h!] \label{fig:active_areas}
\begin{center}
\includegraphics[scale=0.5]{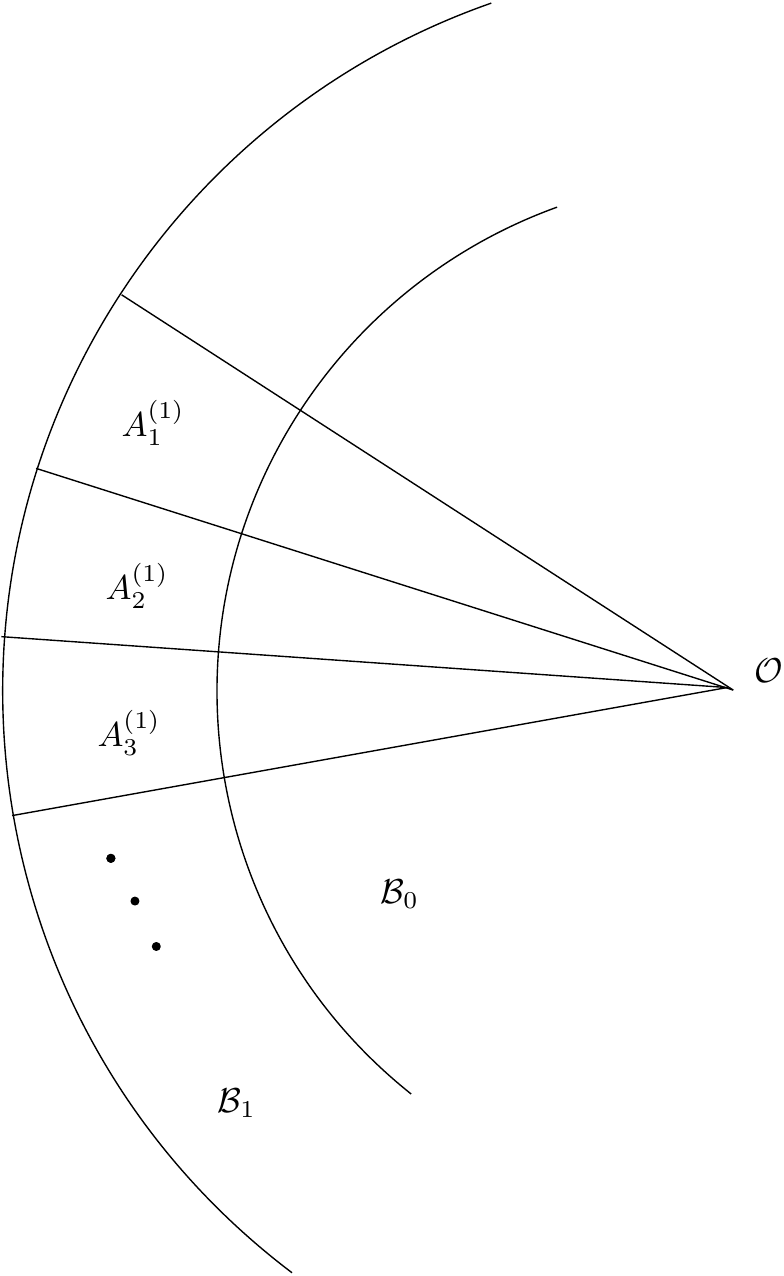}
\caption{Active areas in $\mathcal{B}_1$.}
\end{center}
\end{figure}

By taking the union over all blocks we define
\[
\mathcal{A}^{(1)}:=\bigcup_{j=1}^{K_1}\mathcal{A}_j^{(1)},
\]
with $r_i=R-t_i$, the above can be expressed as
\begin{equation}\label{eq:def_active_area_1}
\mathcal{A}^{(1)} :=\left \{x=(r,\theta) \ : \ x \in \mathcal{A}_j^{(1)}, \textnormal{ for some }j\in \{1, \ldots , K_1\}\right \}.
\end{equation}
We color each block $I_j^{(1)}$ according to the following rule:
\begin{itemize}
\item[(i)] \emph{black}, if for some integer $k\geq \r$, there are vertices $x_1^{(0)},x_2^{(0)},\ldots, x_k^{(0)}$ in $\mathcal{B}_0$ such that
$I_j^{(1)}$ is completely contained in the $k$ disks of radius $R$ centered at $x_1^{(0)},x_2^{(0)},\ldots, x_k^{(0)}$;
\item[(ii)] \emph{white}, otherwise.
\end{itemize}
If a block $I_j^{(1)}$ is black, then any vertex that falls inside $\mathcal{A}_j^{(1)}$ will be connected 
to $x_1^{(0)},x_2^{(0)},\ldots, x_k^{(0)}$.
We show the following intermediate result.
\begin{Lemma}\label{lemma:P_black_block_1}
The expected number of black blocks is bounded from below by
\[
K_1(1-e^{-t_1}).
\]
\end{Lemma}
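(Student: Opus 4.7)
The strategy is to fix a single block $I_j^{(1)}$, prove the pointwise bound $\P(I_j^{(1)}\text{ is black}) \geq 1 - e^{-t_1}$, and then sum over $j = 1, \dots, K_1$ by linearity of expectation.

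A sufficient condition for $I_j^{(1)}$ to be black is the existence of at least $\r$ vertices $u \in \mathcal{B}_0$ each of which \emph{individually} contains the arc $I_j^{(1)}$ in its hyperbolic disk of radius $R$. By Lemma~\ref{lem:relAngle}, together with $t_u \geq t_0 = R/2$ for any $u \in \mathcal{B}_0$, such a vertex $u$ covers the entire block whenever its angular coordinate $\theta_u$ lies in an arc of length at least $2\theta^{(1)} - B_1$ centered at the midpoint of $I_j^{(1)}$. Since $\theta_u$ is uniform on $(0, 2\pi]$, the probability that a fixed $u \in \mathcal{B}_0$ covers $I_j^{(1)}$ is therefore at least $(2\theta^{(1)} - B_1)/(2\pi) \geq \theta^{(1)}/(2\pi)$, using $B_1 = \theta^{(1)}/t_1 \leq \theta^{(1)}$.

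Let $X_j$ count those $u \in \mathcal{B}_0$ that individually cover $I_j^{(1)}$. Because distinct vertices have i.i.d.\ uniform angles, $X_j$ stochastically dominates $\bin\bigl(N_0,\, (2\theta^{(1)} - B_1)/(2\pi)\bigr)$. Chernoff applied directly to $N_0$ gives $N_0 \geq (1-\varepsilon)^2 N e^{-\alpha R/2}$ a.a.s. Substituting $\theta^{(1)} = 2(1-\varepsilon) e^{(t_1 - R/2)/2}$ and $N = \nu e^{R/2}$, a short manipulation using the defining recursion \eqref{eq:def_ti} rewritten as $t_1/2 - (2\alpha - 1) R/4 = \ln\bigl(\tfrac{4\pi t_1}{\nu(1-\varepsilon)^4}\bigr)$ reduces $N_0 \theta^{(1)}/\pi$ to at least $8 t_1 / (1-\varepsilon)$; in particular $\E X_j \geq 4 t_1$. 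The factor $(1-\varepsilon)^4$ in the definition of $t_i$ is precisely calibrated to absorb both the loss $(1-\varepsilon)^2$ from Chernoff on $N_0$ and the factor $(1-\varepsilon)$ coming from Lemma~\ref{lem:relAngle}.

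Since $\r$ is a fixed integer while $t_1 \to \infty$, the Chernoff bound \eqref{eq:chernoff} then yields $\P(X_j < \r) \leq \exp(-\E X_j/4) \leq e^{-t_1}$ for $N$ sufficiently large. Hence $\P(I_j^{(1)}\text{ is black}) \geq 1 - e^{-t_1}$, and summing over $j = 1, \dots, K_1$ by linearity of expectation finishes the proof. The main obstacle is the constant bookkeeping in the middle paragraph: one has to track the $(1-\varepsilon)$ factors through the exponentials in order to confirm that the recursion \eqref{eq:def_ti} is tuned precisely so that the Chernoff tail delivers the clean exponent $t_1$ rather than an $\varepsilon$-dependent quantity.
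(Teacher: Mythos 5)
Your proof is correct and follows essentially the same route as the paper: fix a block, lower-bound the coverage probability for a single $u\in\mathcal B_0$ via Lemma~\ref{lem:relAngle}, apply the Chernoff bound \eqref{eq:chernoff} to the binomially (stochastically dominating) distributed count of covering vertices to get $\P(\mathbf{B}_j^{(1)})\ge 1-e^{-t_1}$, and sum by linearity. The only difference is cosmetic: you use the (slightly sharper, and arguably more natural) coverage probability $(2\theta^{(1)}-B_1)/(2\pi)$ where the paper uses the more conservative $\theta^{(1)}(1-t_1^{-1})/(2\pi)=(\theta^{(1)}-B_1)/(2\pi)$, but both reduce, via the recursion \eqref{eq:def_ti} and \eqref{eq:theta^ixN_i-1}, to $N_0\theta^{(1)}\gtrsim t_1$, which is all that is needed.
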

\begin{proof}
If a vertex $v \in \mathcal{B}_1$ falls inside the active area $\mathcal{A}_j^{(1)}$ of a black block $I_j^{(1)}$, then $v$ will be
connected to at least $\r$ vertices in $\mathcal{B}_0$.
As a consequence, if all vertices in $\mathcal{B}_0$ are infected, then vertex $v$ will become infected as well.
Moreover, if at least $\r$ of these edges incident to $v$ are retained during the percolation process, then vertex $v$ will be added to $\mathcal{K}_{\r}(\mathcal{B}_0)$.

Define the event
\[
\mathbf{B}_j^{(1)}:=\{\textnormal{block }I_j^{(1)} \textnormal{ is black}\}.
\]
The probability of $\mathbf{B}_j^{(1)}$ is the probability of $I_j^{(1)}$ to be completely contained in the disk of $k\geq \r$ vertices in $\mathcal{B}_0$. 
The probability that the disk of a certain vertex in $\mathcal{B}_0$ contains $I_j^{(1)}$ is at least $\theta^{(1)}(1-t_1^{-1}) / (2\pi)$.
Note that these events are independent. 
Therefore
\begin{equation} \label{eq:block_prob}
\begin{split}
\P (\mathbf{B}_j^{(1)}) 
& \geq  \P \left (\bin \left ( N_0, \frac{\theta^{(1)}(1-t_1^{-1})}{2\pi}\right )\geq \r\right ) \geq 1-e^{-N_0\theta^{(1)}/(4\pi)},
\end{split}
\end{equation}
where the second inequality would follow from (\ref{eq:chernoff}), provided we had $N_0 \theta^{(1)} \rightarrow \infty$.
Now, to bound this expression for arbitrary $i$, we use (\ref{eq:N_iFinalBounds}). In particular, we have 
\[
2\nu(1-\varepsilon)^4 e^{{1 \over 2}(t_i + t_{i-1}) - \alpha t_{i-1}}\leq \theta^{(i)} N_{i-1} \leq 2\nu(1+\varepsilon)^3 e^{{1 \over 2}(t_i + t_{i-1}) - \alpha t_{i-1}}.
\]
Furthermore, from (\ref{eq:def_ti}) it follows
\[
{1 \over 2}(t_i + t_{i-1}) - \alpha t_{i-1} = \ln \left({4\pi \over 2\nu (1-\varepsilon)^4} t_i \right),
\]
which in turn yields
\begin{equation}\label{eq:theta^ixN_i-1}
4\pi t_i\leq \theta^{(i)} N_{i-1} \leq \frac{(1+\ve)^3 4\pi}{(1-\ve)^4}t_i.
\end{equation}
Thus $N_0 \theta^{(1)} \rightarrow \infty$. 
As $\theta^{(1)} N_{0} \geq 4 \pi t_1$, now (\ref{eq:block_prob}) implies
\[ 
\P (\mathbf{B}_j^{(1)})  \geq 1-e^{-t_1}.
\]
Denote by $\mathcal{S}_1 $ the set of all black blocks and by $S_1$ its cardinality. 
By what argued above, we have that $ \E S_1$ is bounded from below by the quantity $L_1$, defined as 
\begin{equation}\label{eq:def_L1}
L_1 := K_1 \left (1-e^{-t_1}\right ),
\end{equation}
which finishes the proof.
\end{proof}
Let $\mathcal{S}_{i}$ be the collection of black blocks that will similarly be defined on $\mathcal{C}_{i}$, for $i\geq 1$. 
From now on, let $\Theta_{i}$ denote the total angle that these blocks cover, and set $\Theta_0 = 2 \pi$. 

We will show that a sufficient number of them are black, so that $\Theta_i > \pi/2$ a.a.s.\ (see Appendix~\ref{sect:step1}).
Below we show this for $S_1$, starting with the following Lemma. 
%
%
\begin{Lemma}\label{lemma:concentration_S_1}
There is a decreasing function $\varepsilon_1:=\varepsilon_1(N)=o(1)$ such that with high probability we have $S_1>L_1(1-\varepsilon_1)$.
\end{Lemma}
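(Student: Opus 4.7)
The plan is to establish concentration of $S_1$ about $L_1$ via a second-moment (Chebyshev) argument, exploiting the point already verified in the proof of Lemma~\ref{lemma:P_black_block_1} that each individual block is black with very high probability.

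More precisely, writing $X_j := \mathbf{1}_{\mathbf{B}_j^{(1)}}$ so that $S_1 = \sum_{j=1}^{K_1}X_j$, the argument of Lemma~\ref{lemma:P_black_block_1} actually produces the uniform pointwise bound $\P(\mathbf{B}_j^{(1)}) \geq 1 - e^{-t_1}$ for every $j$, conditional on $\mathcal{E}$ (the derivation used only the identity $N_0 \theta^{(1)} \geq 4\pi t_1$, which is independent of $j$). This immediately gives $\E[S_1] \geq L_1 = K_1(1-e^{-t_1})$ and, since $X_j$ is Bernoulli,
\[
\mathrm{Var}(X_j) = \E[X_j]\bigl(1 - \E[X_j]\bigr) \leq e^{-t_1}.
\]

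Next I would bound the covariances. The $X_j$'s are positively correlated through vertices of $\mathcal{B}_0$ whose disks cover several blocks, and when $\alpha$ is close to $1$ this dependence is severe enough that a naive bounded-differences approach on the vertex positions barely fails (the influence on $S_1$ of a single high-type vertex grows exponentially with its type). Here, however, no fine analysis is needed, since Cauchy--Schwarz already yields
\[
|\mathrm{Cov}(X_j,X_k)| \leq \sqrt{\mathrm{Var}(X_j)\mathrm{Var}(X_k)} \leq e^{-t_1}, \qquad j \neq k.
\]
Summing, $\mathrm{Var}(S_1) \leq K_1 e^{-t_1} + K_1(K_1-1)e^{-t_1} \leq K_1^2 e^{-t_1}$.

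Chebyshev's inequality then gives, for any $\varepsilon_1 > 0$,
\[
\P\bigl(S_1 < (1-\varepsilon_1)\E[S_1]\bigr) \leq \frac{\mathrm{Var}(S_1)}{\varepsilon_1^2 (\E[S_1])^2} \leq \frac{e^{-t_1}}{\varepsilon_1^2 (1-e^{-t_1})^2}.
\]
Setting $\varepsilon_1(N) := e^{-t_1/4}$ makes the right-hand side equal to $e^{-t_1/2}/(1-e^{-t_1})^2 = o(1)$, because $t_1 \asymp \ln N \to \infty$. Since $\E[S_1] \geq L_1$, the event $\{S_1 > (1-\varepsilon_1)\E[S_1]\}$ forces $S_1 > L_1(1-\varepsilon_1)$, and $\varepsilon_1(N)=e^{-t_1(N)/4}$ is manifestly decreasing in $N$ and tends to $0$, which is what is claimed.

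If any step is worth flagging as the ``hard'' one, it is the covariance bound: one might a priori fear that the dependencies between the $X_j$'s (driven by atypical high-type vertices in $\mathcal{B}_0$ whose disks swallow many blocks at once) wreck any second-moment argument. The decisive observation is that these dependencies become irrelevant once one notices that $\mathrm{Var}(X_j)$ is itself exponentially small in $t_1$, so Cauchy--Schwarz absorbs all the correlation for free and no vertex-by-vertex influence analysis is required.
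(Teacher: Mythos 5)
Your proof is correct, and it takes a genuinely different route from the paper's. The paper applies the bounded-differences (Azuma--Hoeffding) inequality (\ref{eq:bdd_ineq}) to the positions of the vertices in $\mathcal{B}_0$: it asserts that moving a single such vertex changes $S_1$ by at most $2\theta^{(1)}/B_1 = 2t_1$ and then solves for the $\varepsilon_1$ making the exponent diverge, arriving at the requirement $\varepsilon_1 \gg \sqrt{\theta^{(1)}t_1}$. Your Chebyshev argument with the crude Cauchy--Schwarz covariance bound is shorter and, as you correctly note, entirely avoids any vertex-by-vertex influence analysis: once each $\mathrm{Var}(X_j)\leq e^{-t_1}$, no structural information about the correlations between blocks is required, and in particular the objection you raise about high-type vertices of $\mathcal{B}_0$ swallowing many blocks at once (which does complicate the Lipschitz-constant bookkeeping in a bounded-differences argument) is simply irrelevant to your route. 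The trade-off is the strength of the tail: Azuma yields a stretched-exponential failure probability of order $e^{-\Theta(N^{(1-\alpha)/6})}$ (cf.\ (\ref{eq:def_M2}) and Appendix~\ref{sect:linearly_many_vertices}), whereas Chebyshev gives only $e^{-t_1/2}\asymp N^{-(2\alpha-1)/2}$. Both are $o(1)$, which is all this lemma asserts. However, the paper replays the identical argument in the inductive step (Lemma~\ref{lemma:concentration_S_i}) and sums the failure probability over $T=O(\ln R)$ bands in which $t_i$ decreases down to the constant $C$; there a Chebyshev-type bound of order $e^{-ct_i}$ would be bounded away from $0$ for $i$ near $T$, so your route would not carry over. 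For this single base-case lemma both arguments succeed and yours is self-contained.
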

\begin{proof}
Note that changing the position of one vertex in $\mathcal{B}_0 $ affects the number of black blocks on $ \mathcal{C}_1$ by at most 
\[
2 \frac{ \theta^{(1)}}{B_1}=2\frac{\theta^{(1)}}{\theta^{(1)}/t_1}=2t_1.
\]
This is the case as the disk of radius $R$ around each vertex in $\mathcal{B}_0$ contains at most $\theta^{(1)}/B_1$ intervals $I_j^{(1)}$.
Hence, by inequality (\ref{eq:bdd_ineq}) we have
\begin{equation}\label{eq:azuma_ve_1}
\begin{split}
\P & \bigl [ S_1<(1-\varepsilon_1)\E S_1 \mid N_0, \Theta_0 \bigr ] 
\leq \exp{\left \{ -\frac{\varepsilon_1^2 \E^2 S_1 }{N_0 (2t_1)^2}\right \} } \leq 
\exp{\left \{ -\frac{\varepsilon_1^2 L_1^2 }{N_0 (2t_1)^2}\right \} }.
\end{split}
\end{equation}
Now we need to find $\varepsilon_1$ such that the absolute value of the exponent above tends to infinity. Using the definitions of $L_1(\asymp 1/B_1)$  and  of $B_1(= \theta^{(1)}/t_1)$ (cf.\ (\ref{eq:def_L1}) and (\ref{eq:def_Bi}) respectively), this reduces to finding $\varepsilon_1$ such that
\[
M_1^{(2)}(N):=\frac{\varepsilon_1^2}{B_1^2 N_0 t_1^2}\asymp
\frac{\varepsilon_1^2}{(\theta^{(1)})^2 N_0}\to \infty .
\]
Here we recall relation (\ref{eq:theta^ixN_i-1}) (that is, $\theta^{(1)} N_0 \asymp t_1$), which implies that the above holds if
\begin{equation}\label{eq:ve_1}
\varepsilon_1\gg \sqrt{\theta^{(1)}t_1}.
\end{equation}
Now, by (\ref{eq:azuma_ve_1}) we have that the \emph{number} of black blocks $S_1 $ (at this first stage) is, with high probability, bounded from below by
\begin{equation}\label{eq:def_L_1}
L_1':= L_1(1-\varepsilon_1),
\end{equation}
for any function $\ve_1=\ve_1(N)=o(1)$ for which (\ref{eq:ve_1}) holds.
\end{proof}
Now, let $\Theta_1$ denote the total angle covered by the blocks in $\mathcal{S}_1$.
We conclude this section by showing the following result. 
\begin{Proposition}\label{prop:Theta_1>pi}
Asymptotically almost surely we have
\[
\Theta_1>\pi.
\]
\end{Proposition}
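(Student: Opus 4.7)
The plan is to directly translate the concentration statement of Lemma~\ref{lemma:concentration_S_1} into a statement about total angle. Since the blocks $I_1^{(1)}, \ldots, I_{K_1}^{(1)}$ are pairwise disjoint and each has angular width exactly $B_1$, the total angle covered by black blocks is simply
\[
\Theta_1 = S_1 \cdot B_1.
\]
By Lemma~\ref{lemma:concentration_S_1}, a.a.s.\ $S_1 \geq L_1(1-\varepsilon_1) = K_1(1-e^{-t_1})(1-\varepsilon_1)$, and by the definition $K_1 = \lfloor 2\pi / B_1\rfloor$ we have $K_1 B_1 \geq 2\pi - B_1$. Combining these bounds,
\[
\Theta_1 \; \geq \; (2\pi - B_1)\,(1 - e^{-t_1})\,(1 - \varepsilon_1) \quad \text{a.a.s.}
\]

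Next, I would check that all three correction terms vanish as $N \to \infty$, so that the right-hand side tends to $2\pi$. From (\ref{eq:def_ti}) and $t_0 = R/2$ one has $t_1 = \lambda R/2 + O(\ln R)$, so $t_1 \to \infty$ and $e^{-t_1} \to 0$. For $B_1 = \theta^{(1)}/t_1$, plugging $t_0 = R/2$ and $t_1 = \lambda R/2 + O(\ln R)$ into (\ref{eq:def_theta^i}) gives $\theta^{(1)} = 2(1-\varepsilon) e^{(\lambda - 1)R/4 + O(\ln R)}$, which tends to $0$ polynomially in $N$ since $\lambda < 1$; hence $B_1 \to 0$. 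Finally, $\varepsilon_1 = o(1)$ is allowed by (\ref{eq:ve_1}), since $\sqrt{\theta^{(1)} t_1}\to 0$ as well.

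Therefore $\Theta_1 \to 2\pi$ a.a.s., and in particular $\Theta_1 > \pi$ a.a.s., as required. The only technical point worth spelling out carefully is the asymptotic behaviour of $t_1$ and $\theta^{(1)}$ derived from (\ref{eq:def_ti}), but these are the same kinds of estimates already used (implicitly) to justify (\ref{eq:theta^ixN_i-1}) and to choose a valid $\varepsilon_1$; no genuinely new obstacle arises, and the proof is essentially a one-line combination of Lemma~\ref{lemma:concentration_S_1} with the definition of $K_1$.
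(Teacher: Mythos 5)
Your proof is correct and follows essentially the same route as the paper: combine the lower bound on $S_1$ from Lemma~\ref{lemma:concentration_S_1} with $\Theta_1 = S_1 B_1$ and $K_1 = \lfloor 2\pi/B_1 \rfloor$. You are in fact slightly more careful than the paper by keeping the $K_1 B_1 \geq 2\pi - B_1$ correction and explicitly verifying $B_1, e^{-t_1}, \varepsilon_1 \to 0$.
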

\begin{proof}
By definition, $\Theta_1 \geq B_1 L_1'$ which, by 
Lemmas \ref{lemma:P_black_block_1} and \ref{lemma:concentration_S_1} is bounded from below by
\begin{equation}\label{eq:Theta_1>pi}
\Theta_1\geq 2\pi (1-e^{-t_1})(1- \eps_1 ) > \pi,
\end{equation}
provided that $N$ is sufficiently large.
\end{proof}

\section{Inductive Step}\label{sect:inductive_step}
In this section we show that the infection spreads throughout the graph, and eventually reaches a linear fraction of vertices.

\subsection{Black Blocks}\label{sect:black_blocks_inductive}
Assume that at step $i-1$ (for any $2\leq i< T-1$) we have a collection of pairwise disjoint black blocks $\mathcal{S}_{i-1}$ each covering an angle
equal to $B_{i-1}$ (recall its definition from (\ref{eq:def_Bi})). The total angle that is covered by these black blocks is equal to
$\Theta_{i-1}= S_{i-1} B_{i-1}$, which we will show to be at least $\pi/2$ a.a.s. (see Appendix~\ref{sect:step1}).

We need to show that when passing from level $i-1$ to level $i$, we still obtain a sufficient number of black blocks. We proceed with the details.
First, we consider the projection of the blocks in $ \mathcal{S}_{i-1}$ (i.e., the set of black blocks found on $\mathcal{C}_{i-1}$) onto the
outer boundary of $\mathcal{B}_i$ (i.e.,\ on the circle $\mathcal{C}_i $), and declare the images of these projections \emph{uncolored}. 

Subsequently, we divide each block (in the projection) into three parts, namely:
\begin{itemize}
\item Parts (1) and (3): the first and the last part of the block, both of angle $\theta^{(i)}$;
\item Part (2): the remaining (central) part of the block, which has angle $B_{i-1}-2\theta^{(i)} $.
\end{itemize}
Such a subdivision is shown in Figure~\ref{fig:black_block}.

\begin{figure}[h!] \label{fig:black_block}
\begin{center}
\includegraphics[scale=0.5]{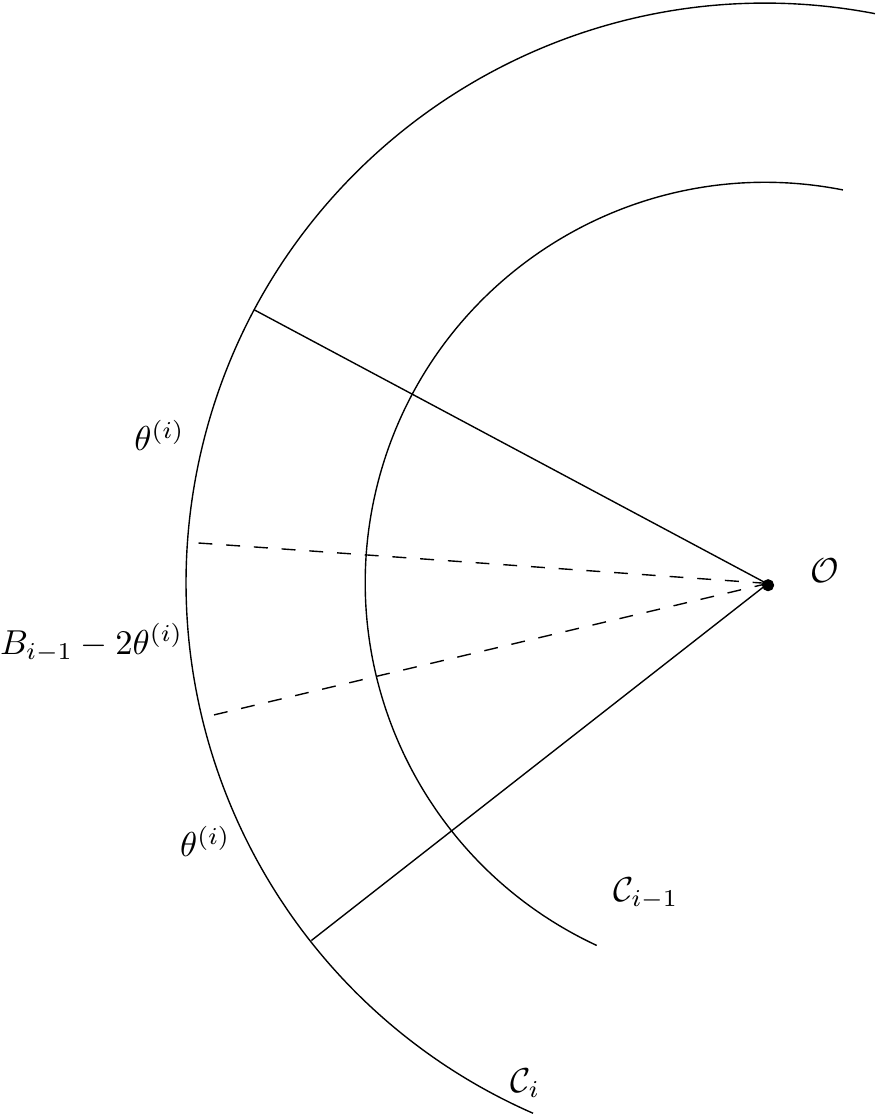}
\caption{Subdivisions of inherited blocks.}
\end{center}
\end{figure}

Now, we keep only the \emph{central} part of each block, discarding all the rest which as we will see later is negligible. 
We denote the collection of all remaining blocks of angle $B_{i-1}-2\theta^{(i)}$ by $ \mathcal{S}_{i-1}'$. 
Note that $|\mathcal{S}_{i-1}'| = |\mathcal{S}_{i-1}| = S_{i-1}$. 
We split each such part into
smaller \emph{uncolored} blocks of angle $B_i$. Finally, discard the blocks that are leftover (if any).
It is now clear that the number of (uncolored) blocks of angle $B_i$ is bounded from below by
\begin{equation}\label{eq:small_blocks}
|\mathcal{S}_{i-1}'|\left \lfloor \frac{B_{i-1}-2\theta^{(i)}}{B_i} \right  \rfloor = S_{i-1}\left \lfloor \frac{B_{i-1}}{B_i} -2t_i\right  \rfloor.
\end{equation}
Recall that $\Theta_{i-1}$ is the total angle covered by black blocks on the circle $\mathcal{C}_{i-1}$, that is, the blocks in
$\mathcal{S}_{i-1}$. We denote the blocks in $\mathcal{S}_{i-1}'$ by $I_j^{(i)}$, for $j=1,\ldots, K_i$, with $K_i := \lfloor  \Theta_{i-1}/B_i \rfloor$.

Analogously to (\ref{eq:def_active_area_1}), we define the active area below the circle $\mathcal{C}_i$ as
\[
\mathcal{A}^{(i)}:=\left \{ x=(r,\theta) \ : \ r_{i-1}<r<r_i, \ \theta \in I_j^{(i)} \textnormal{ for some }j\in \{1, \ldots , K_i\} \right \}.
\]
Given a block $I_j^{(i)}$ we define the following event:
\[
\begin{split}
\mathbf{B}_j^{(i)} & := \bigl \{ \textnormal{for some $k\geq \r$ there are vertices }x_1^{(i-1)}, x_2^{(i-1)}, \ldots ,x_k^{(i-1)}\in\mathcal{S}_{i-1},\\
& \textnormal{such that\ }I_j^{(i)} \textnormal{ is completely contained in the }k\textnormal{ disks of radius }R,\textnormal{ centered}\\
& \textnormal{at }x_1^{(i-1)}, x_2^{(i-1)}, \ldots ,x_k^{(i-1)},\textnormal{ AND at least $\r$  edges connecting each}\\
& x_j^{(i-1)}\textnormal{ with any vertex in }\mathcal{S}_{i-2}\textnormal{ are retained during the edge percolation process}\bigr \} . 
\end{split}
\]
Now we color each block $I_j^{(i)}$ \emph{black} if and only if the event $\mathbf{B}_j^{(i)}$ is realized. 
At this point we show a generalization of Lemma \ref{lemma:P_black_block_1}.
\begin{Lemma}\label{lemma:P_black_block_induction}
The expected value of black blocks on $\mathcal{C}_i$ is bounded from below by
\[
S_{i-1}\left (\frac{B_{i-1}}{B_i}-2t_i\right )(1-e^{-c\rho^\r t_i}),
\]
for some $c>0$ that does not depend on $i$. 
\end{Lemma}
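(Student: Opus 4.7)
The plan is to follow the structure of Lemma~\ref{lemma:P_black_block_1}, generalizing from the clique in $\mathcal{B}_0$ to the inductively discovered set $\mathcal{S}_{i-1}$ and paying an extra factor $\rho^r$ to account for the edge percolation. Formula~(\ref{eq:small_blocks}) already gives at least $S_{i-1}(B_{i-1}/B_i - 2t_i)$ uncolored blocks $I_j^{(i)}$ on $\mathcal{C}_i$, which produces the first factor of the claimed bound. It therefore suffices to show, conditionally on the $\sigma$-algebra $\mathcal{F}_{i-1}$ generated by the vertex positions in $\bigcup_{j\leq i-2}\mathcal{B}_j$ and the retention of every edge with both endpoints in this union, that each such block is declared black with probability at least $1-e^{-c\rho^r t_i}$ for some absolute $c>0$.

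Fix $I_j^{(i)}$. By construction it lies inside the central subinterval of width $B_{i-1}-2\theta^{(i)}$ of the projection onto $\mathcal{C}_i$ of some $\tilde I \in \mathcal{S}_{i-1}$, so its midpoint $\phi_0$ has angular distance at least $\theta^{(i)}$ from either endpoint of $\tilde I$. Define the \emph{safe zone} $Z$ as the arc of angular width $\theta^{(i)}-B_i$ centered at $\phi_0$. Any vertex $u\in \mathcal{B}_{i-1}$ whose angle lies in $Z$ satisfies: (i) for every $v\in I_j^{(i)}$ the triangle inequality yields $|\phi_u-\phi_v|\leq (\theta^{(i)}-B_i)/2 + B_i/2 = \theta^{(i)}/2 < \theta_u^{(i)}:=2(1-\eps)e^{(t_u+t_i-R)/2}$ (using $t_u\geq t_{i-1}$), whence by Lemma~\ref{lem:relAngle} we have $d(u,v)<R$ and $u$'s disk covers $I_j^{(i)}$; (ii) $\phi_u$ lies in the angular range of $\tilde I$, so $u$ sits in the active area of $\tilde I$, and by the blackness of $\tilde I$ the underlying graph $\mathcal{G}(N;\alpha,\nu)$ already contains at least $r$ edges from $u$ to $\mathcal{S}_{i-2}$. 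The probability that at least $r$ of these edges survive the $\rho$-percolation is at least $\rho^r$ (take any fixed $r$ of them to survive).

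Pooling over vertices, the count of $u\in \mathcal{B}_{i-1}$ that simultaneously sit in $Z$ and retain at least $r$ edges to $\mathcal{S}_{i-2}$ stochastically dominates $\bin(N_{i-1},q)$ with $q:=\rho^r(\theta^{(i)}-B_i)/(2\pi)$, since distinct vertices have independent uniform angles and use disjoint sets of fresh edges for their percolation witnesses. Combining (\ref{eq:theta^ixN_i-1}) with $B_i=\theta^{(i)}/t_i$ gives $N_{i-1}q\geq 2\rho^r t_i(1-o(1))$; the Chernoff bound (\ref{eq:chernoff}) applied with $\ell=r$ then yields $\P(\mathbf{B}_j^{(i)}\mid \mathcal{F}_{i-1})\geq 1 - e^{-c\rho^r t_i}$ for some $c>0$ independent of $i$. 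Summing over the uncolored blocks and taking expectations produces the bound in the statement.

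The main delicacy is confirming the conditional independence: the edges whose survival enters $\mathbf{B}_j^{(i)}$ run between $\mathcal{B}_{i-1}$ and $\mathcal{B}_{i-2}$, while $\mathcal{F}_{i-1}$ only consulted edges with both endpoints in bands of index $\leq i-2$; by the product structure of the $\rho$-percolation these two edge sets are percolated independently, so conditioning on $\mathcal{F}_{i-1}$ (which determines $S_{i-1}$, the shape of $\tilde I$ and $\mathcal{S}_{i-2}$) leaves the fresh percolation for the current step untouched and the Chernoff estimate goes through. Apart from this bookkeeping the argument is a direct transcription of Lemma~\ref{lemma:P_black_block_1}.
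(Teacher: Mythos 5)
Your proposal is correct and follows essentially the same approach as the paper: per-vertex covering probability $\theta^{(i)}(1-t_i^{-1})/(2\pi)$, an extra factor $\rho^{\r}$ for edge retention, a Chernoff bound to get $1-e^{-c\rho^{\r}t_i}$, and multiplication by the count (\ref{eq:small_blocks}) of uncolored blocks. You additionally spell out two points the paper treats implicitly --- the explicit ``safe zone'' construction (verifying that vertices with angle in $Z$ both cover $I_j^{(i)}$ and lie inside $\tilde I$, hence have $\geq \r$ potential edges to $\mathcal{S}_{i-2}$) and the conditional independence of the fresh edge-percolation step from the $\sigma$-algebra of earlier bands --- but the substance and the bounds obtained are the same.
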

\begin{proof}
By  the definition of the event $\mathbf{B}_j^{(i)}$, we first need to ensure that the interval $I_j^{(i)}$ is contained in the disk of
radius $R$ of $k\geq \r$ vertices, belonging to $\mathcal{S}_{i-1}$. We claim that for each vertex in $v \in \mathcal{B}_{i-1}$ this
occurs with probability at least $(\theta^{(i)} - B_i)/(2\pi)=\theta^{(i)}(1-t_i^{-1}) /(2\pi)$. 
%
%
%
To see this, note that by Lemma~\ref{lem:relAngle}, the intersection of the disk of radius $R$ around $v$ with $\mathcal{C}_i$ is an 
arc $I(v)$ of angle at least $2\theta^{(i)}$. 
Therefore, $I(v)$ covers a block $I_j^{(i)}$  for a range of the angle of $v$ which is at least $2\theta^{(i)} - B_i$ 
(as the length of $I_j^{(i)}$ is equal to $B_i$). 


Moreover, at least $\r$ edges must be
connecting each such vertex with a vertex in $\mathcal{S}_{i-2}$ and are retained after the edge percolation
process. Since this last event occurs with probability at least $\rho^{\r}$, we have that
\[
\begin{split}
\P(\mathbf{B}_j^{(i)})
& \geq \P\left ( \bin\left ( N_{i-1}, \frac{\theta^{(i)}(1-t_i^{-1})\rho^{\r}}{2\pi}\right )\geq \r \mid N_{i-1} \right )\\
& \geq 1-e^{-c t_i \rho^{\r}},
\end{split}
\]
which again follows from (\ref{eq:chernoff}), for some $c = c(\eps ) >0$, uniformly for all $i >0$. 

Denote by $\mathcal{S}_i$ the collection of black blocks that we end up with, and set $ S_i:=|\mathcal{S}_i|$. Thus, 
\begin{equation}\label{eq:def_L_i}
\E(S_i\mid S_{i-1},N_{i-1}) \geq  S_{i-1}\left ( \frac{B_{i-1}}{B_i}-2t_i\right )(1-e^{-c\rho^{\r} t_i})=: L_i,
\end{equation}
which concludes the proof.
\end{proof}
At this point we proceed as in Section \ref{sect:base_step} and show an analogue of Lemma \ref{lemma:concentration_S_1}.
\begin{Lemma}\label{lemma:concentration_S_i}
For every $i>0$ let $\ve_i:= (\theta^{(i)})^{1/6}$. For any $0< i < T$, conditional on $S_{i-1}$ and $N_{i-1}$, 
with probability at least $1-\exp{(-\Theta \left( 1/( t_i(\theta^{(i)})^{2/3}) \right)}$ we have
\[
S_i\geq L_i(1-\ve_i),
\]
where $L_i$ is defined in (\ref{eq:def_L_i}).
\end{Lemma}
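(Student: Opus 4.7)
The plan is to mimic the proof of Lemma \ref{lemma:concentration_S_1} via the Hoeffding--Azuma inequality, but now $S_i$ depends not only on vertex positions but also on edge retentions, so the choice of independent random variables must be made carefully. If one naively used a separate Bernoulli variable for each potential edge retention between $\mathcal{B}_{i-1}$ and $\mathcal{B}_{i-2}$, then $\sum c_k^2$ would pick up a spurious factor of $N_{i-2}$ and the target exponent would be lost. Instead, I would condition on the full configuration below level $i-1$ (positions and retentions in $\mathcal{B}_0,\ldots,\mathcal{B}_{i-2}$, and in particular on $\mathcal{S}_{i-2}$ and which vertices in $\mathcal{B}_{i-2}$ are good), and for each vertex $u \in \mathcal{B}_{i-1}$ bundle its position together with the retention statuses of all its edges to $\mathcal{B}_{i-2}$ into a single \emph{profile} random variable $X_u$. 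Conditionally on $N_{i-1}$, the $N_{i-1}$ profiles are mutually independent, and $S_i$ is a deterministic function of them.

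Next, I would bound the bounded-differences coefficients. Any change of a single profile $X_u$ can only affect blocks $I_j^{(i)}$ lying inside the disk of radius $R$ around $u$; by Lemma \ref{lem:relAngle} that disk meets $\mathcal{C}_i$ in an arc of angle at most $2\theta^{(i)}$, which intersects at most $2\theta^{(i)}/B_i = 2t_i$ blocks on $\mathcal{C}_i$. Whether $u$ satisfies the retention requirement (having $\geq \r$ retained edges to good vertices in $\mathcal{B}_{i-2}$) only toggles its contribution to those same blocks and cannot extend its reach any further. Hence $c_u = 2t_i$ works uniformly in $u$, and using $L_i \leq \E S_i$ from Lemma \ref{lemma:P_black_block_induction}, inequality (\ref{eq:bdd_ineq}) gives
\[
\P\bigl(S_i < (1-\ve_i)L_i\bigr) \;\leq\; \P\bigl(S_i < (1-\ve_i)\E S_i\bigr) \;\leq\; \exp\!\left(-\frac{\ve_i^2 L_i^2}{2\, N_{i-1}\, t_i^2}\right).
\]

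To close, I would substitute the estimates established earlier: by (\ref{eq:theta^ixN_i-1}) and (\ref{eq:N_iFinalBounds}) one has $N_{i-1} \asymp t_i/\theta^{(i)}$, and using the inductive hypothesis $\Theta_{i-1} \gtrsim 1$ (to be maintained throughout Appendix~\ref{sect:step1}) together with the definition of $L_i$, the leading term $\Theta_{i-1}/B_i$ dominates the correction $2t_i S_{i-1}$, giving $L_i \gtrsim 1/B_i \asymp t_i/\theta^{(i)}$. With $\ve_i = (\theta^{(i)})^{1/6}$, the exponent becomes
\[
\frac{\ve_i^2 L_i^2}{N_{i-1}\, t_i^2} \;\asymp\; \frac{(\theta^{(i)})^{1/3}\,(t_i/\theta^{(i)})^2}{(t_i/\theta^{(i)})\, t_i^2} \;=\; \frac{1}{t_i\,(\theta^{(i)})^{2/3}},
\]
which matches the claimed probability bound. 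The main obstacle is precisely the bookkeeping of the first step: packaging positions and retentions into a single profile per vertex so that only $N_{i-1}$ independent variables enter Azuma; any finer decomposition inflates $\sum c_k^2$ by a factor of $N_{i-2}$ and destroys the target exponent.
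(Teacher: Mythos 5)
Your argument is the paper's, step for step: apply the bounded-differences inequality (\ref{eq:bdd_ineq}) conditionally on $S_{i-1}$ and $N_{i-1}$ with $N_{i-1}$ independent coordinates and $c_u=2t_i$, lower-bound $\E(S_i\mid S_{i-1},N_{i-1})\ge L_i$ via Lemma~\ref{lemma:P_black_block_induction}, and unwind the exponent using $\Theta_{i-1}>\pi/2$ and (\ref{eq:theta^ixN_i-1}) to reach the rate $1/(t_i(\theta^{(i)})^{2/3})$. Your ``profile'' bundling is a worthwhile clarification that the paper elides: since the events $\mathbf{B}_j^{(i)}$ depend on the retention coins as well as on positions, packaging each $u\in\mathcal{B}_{i-1}$'s position together with its retention coins to $\mathcal{B}_{i-2}$ into a single McDiarmid coordinate is exactly what keeps the martingale at length $N_{i-1}$ with $c_u=2t_i$; the paper just writes ``changing the position of one vertex'' without making this explicit. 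One shared imprecision worth flagging, though it originates in the paper and is merely reproduced in your write-up: the bound $c_u=2t_i$ treats $2\theta^{(i)}$ as an \emph{upper} bound on the arc a disk around $u$ cuts out of $\mathcal{C}_i$, but Lemma~\ref{lem:relAngle} gives $2\theta^{(i)}$ only as a lower bound (attained when $t_u\approx t_{i-1}$); a vertex near the inner edge of $\mathcal{B}_{i-1}$, with $t_u\approx t_{i-2}$, cuts an arc of order $e^{(t_{i-2}+t_i-R)/2}\gg\theta^{(i)}$, so the worst-case increment of $S_i$ under a single coordinate change is larger than $2t_i$, and a fully rigorous version of this step needs either type-dependent $c_u$'s or conditioning on the type profile of $\mathcal{B}_{i-1}$.
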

\begin{proof}
Analogously to Lemma \ref{lemma:concentration_S_1}, we see that by changing the position of one vertex in $\mathcal{B}_{i-1}$, one can change the number of blocks in $\mathcal{S}_i$  by at most 
\[
2 \frac{\theta^{(i)}}{B_i} = 2\frac{\theta^{(i)}}{\theta^{(i)}/t_i}=2t_i.
\]
Hence, by (\ref{eq:bdd_ineq}) we get
\[
\begin{split}
& \P \bigl [S_i<(1-\varepsilon_i)\E(S_i\mid S_{i-1},N_{i-1})\mid S_{i-1},N_{i-1}\bigr ] 
\leq \exp{\left \{ -\frac{\ve_i^2 (\E(S_i\mid S_{i-1},N_{i-1}))^2}{N_{i-1}(2t_i)^2}\right \}}.
\end{split}
\]
We will specify $\ve_i$ so that the absolute value of the above exponent tends to infinity. 
Recall that 
\begin{equation}\label{eq:theta_S_B}
\Theta_{i-1} = S_{i-1} B_{i-1}.
\end{equation}
Furthermore, if we demand that  for $i\leq T$ we have $\Theta_{i-1} > \pi/2 $ (see the definition of $T$ in the next section), then 
the absolute value of the above exponent is bounded as follows: 
\[
\begin{split}
\frac{\ve_i^2 \E^2(S_i\mid S_{i-1},N_{i-1})}{N_{i-1}(2t_i)^2}
& \gtrsim  \frac{\ve_i^2 (S_{i-1})^2(B_{i-1}/B_i)^2}{N_{i-1}(2t_i)^2} \stackrel{(\ref{eq:theta_S_B})}{\asymp} 
\frac{\ve_i^2 \Theta_{i-1}^2}{N_{i-1}t_i^2 B_i^2}\\
& \asymp \frac{\ve_i^2 }{N_{i-1}(t_i)^2B_i^2}
=\frac{\ve_i^2}{N_{i-1}(\theta^{(i)})^2}.
\end{split}
\]
By (\ref{eq:theta^ixN_i-1}) we get 
\[
\frac{\ve_i^2}{N_{i-1}\theta^{(i)}\theta^{(i)}}\asymp \frac{\ve_i^2}{t_i\theta^{(i)}}.
\]
Hence, analogously to the base case, we get that $\ve_i$ should satisfy
\begin{equation*}
\ve_i\gg \sqrt{t_i \theta^{(i)}}.
\end{equation*}
We choose
\begin{equation*}
\ve_i:=\left (\theta^{(i)}\right )^{1/6},
\end{equation*}
and set
\begin{equation}\label{eq:def_M2}
M_i^{(2)}(N):=\frac{\ve_i^2}{t_i \theta^{(i)}}=\frac{1}{t_i (\theta^{(i)})^{2/3}}\,, \quad \forall i\geq 1.
\end{equation}
In other words, we apply Lemma \ref{lemma:P_black_block_induction} and get that with probability at least 
$1-e^{-\Theta ( M_i^{(2)}(N) )}$ we have
\[
S_i \geq L_i(1-\ve_i)=: L_i', 
\]
where $L_i$ has been defined in (\ref{eq:def_L_i}).
The proof of the fact that $M_i^{(2)}(N)\to \infty $ is deferred to Appendix \ref{sect:linearly_many_vertices}, hence the proof of the Lemma is concluded.
\end{proof}
This in turn implies that conditional on $S_{i-1}$, with probability at least $1-e^{-\Theta (M_i^{(2)}(N))}$, the total length $\Theta_i $ of the set 
of blocks in $\mathcal{S}_i$ is bounded from below by
\begin{equation*}
L_i'B_i = S_{i-1}(B_{i-1}-2B_i t_i)(1-e^{-c \rho^{\r} t_i})\left (1-\left (\theta^{(i)}\right )^{1/6}\right ).
\end{equation*}
By putting together these facts, we deduce the following result.
\begin{Proposition}\label{prop:Theta_i>pi/2}
For any $1< i < T$,  conditional on $\Theta_{i-1}$, we have
$$\Theta_i \geq  \Theta_{i-1}\left ( 1-\left ( 2\frac{\theta^{(i)}}{\theta^{(i-1)}}t_{i-1}+e^{-c \rho^{\r} t_i}
+\left ( \theta^{(i)}\right )^{1/6} \right) \right),$$
with probability at least $1- \exp \left( - \Theta ( M_i^{(2)} (N) )\right)$. 
\end{Proposition}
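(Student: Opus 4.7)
The plan is to assemble the statement directly from Lemma~\ref{lemma:P_black_block_induction} and Lemma~\ref{lemma:concentration_S_i}, combined with the definitions of $B_i$, $\theta^{(i)}$, and the relation $\Theta_{i-1} = S_{i-1} B_{i-1}$. No new probabilistic input is needed; the content is really an elementary manipulation of the quantities already bounded in the preceding section.

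First, I would condition on $\Theta_{i-1}$ (equivalently on $S_{i-1}$, since $B_{i-1}$ is deterministic) and on the event $\mathcal{E}$ giving concentration of $N_{i-1}$ from (\ref{eq:N_iFinalBounds}). Applying Lemma~\ref{lemma:concentration_S_i} with the choice $\varepsilon_i = (\theta^{(i)})^{1/6}$ yields
\[
S_i \;\geq\; L_i\,(1-\varepsilon_i) \;=\; S_{i-1}\!\left(\frac{B_{i-1}}{B_i}-2t_i\right)\!\bigl(1-e^{-c\rho^{\r} t_i}\bigr)(1-\varepsilon_i),
\]
with probability at least $1-\exp(-\Theta(M_i^{(2)}(N)))$. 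Multiplying by $B_i$, and using the definition $\Theta_i \geq S_i B_i$ (which just says that each block in $\mathcal{S}_i$ contributes angle $B_i$), I obtain
\[
\Theta_i \;\geq\; S_{i-1}\bigl(B_{i-1}-2t_iB_i\bigr)\bigl(1-e^{-c\rho^{\r} t_i}\bigr)(1-\varepsilon_i).
\]

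Next, I substitute the definitions. Since $B_i = \theta^{(i)}/t_i$, we have $2t_iB_i = 2\theta^{(i)}$, so $B_{i-1}-2t_iB_i = B_{i-1}\bigl(1 - 2\theta^{(i)}/B_{i-1}\bigr)$; and since $B_{i-1}=\theta^{(i-1)}/t_{i-1}$, the ratio inside becomes $2t_{i-1}\theta^{(i)}/\theta^{(i-1)}$. Using $\Theta_{i-1} = S_{i-1}B_{i-1}$, this gives
\[
\Theta_i \;\geq\; \Theta_{i-1}\left(1-2\frac{\theta^{(i)}}{\theta^{(i-1)}}t_{i-1}\right)\bigl(1-e^{-c\rho^{\r} t_i}\bigr)\bigl(1-(\theta^{(i)})^{1/6}\bigr).
\]

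Finally, the elementary inequality $(1-a)(1-b)(1-c)\geq 1-(a+b+c)$ for $a,b,c\in[0,1]$ (all three quantities are in $[0,1]$ for $N$ large enough, as $\theta^{(i)}=o(1)$ and $t_{i-1}\theta^{(i)}/\theta^{(i-1)}=o(1)$ for $i<T$ under the range specified in Appendix~\ref{sect:step1}) converts the product into the desired sum, yielding the statement. The only nontrivial ingredient that has not been proved at this point in the paper is that each of the three quantities is indeed small (so that the product inequality is applicable and so that $M_i^{(2)}(N)\to\infty$); these are precisely the asymptotic estimates deferred to Appendix~\ref{sect:step1}, which I would therefore cite rather than reprove here. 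In summary, the only real ``work'' in this proposition is the bookkeeping that connects $S_{i-1}$ and $B_{i-1}$ back to $\Theta_{i-1}$; the probabilistic heavy lifting has already been done in Lemmas~\ref{lemma:P_black_block_induction}--\ref{lemma:concentration_S_i}.
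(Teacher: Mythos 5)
Your proposal is correct and follows essentially the same route as the paper: apply Lemma~\ref{lemma:concentration_S_i} to get $S_i \geq L_i(1-\varepsilon_i)$, multiply through by $B_i$, rewrite $L_i B_i$ using $\Theta_{i-1}=S_{i-1}B_{i-1}$ and the identities $B_i=\theta^{(i)}/t_i$, $B_{i-1}=\theta^{(i-1)}/t_{i-1}$, and then expand the product $(1-a)(1-b)(1-c)\geq 1-(a+b+c)$. You are a little more careful than the paper in flagging that this last inequality requires the three error terms to lie in $[0,1]$ (the paper uses it silently), but the argument and the dependency chain are the same.
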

\begin{proof}
By Lemma \ref{lemma:concentration_S_i}, with high probability we have
\begin{equation}\label{eq:first_lower_Theta}
\begin{split}
\Theta_i & = S_iB_i
\geq L_i'B_i=S_{i-1}B_{i-1}\left (1-2\frac{B_i}{B_{i-1}} t_i\right )(1-e^{-c\rho^{\r} t_i})\left (1-\left (\theta^{(i)}\right )^{1/6}\right )\\
& =S_{i-1}B_{i-1}\left ( 1-2\frac{\theta^{(i)}}{\theta^{(i-1)}}t_{i-1}\right )(1-e^{-c \rho^{\r} t_i})\left (1-\left (\theta^{(i)}\right )^{1/6}\right )\\
& = \Theta_{i-1}\left ( 1-2\frac{\theta^{(i)}}{\theta^{(i-1)}}t_{i-1}\right )(1-e^{-c\rho^{\r} t_i})\left (1-\left (\theta^{(i)}\right )^{1/6}\right )\\
& \geq \Theta_{i-1}\left ( 1-\left ( 2\frac{\theta^{(i)}}{\theta^{(i-1)}}t_{i-1}+e^{-c \rho^{\r} t_i}+\left (\theta^{(i)}\right )^{1/6}\right )
\right ),
\end{split}
\end{equation}
which concludes the proof.
\end{proof}

\subsection{Proof of Theorem \ref{thm:main}}\label{sect:proof_thm_main}
\subsubsection{Parts $(i)$ and $(ii)$}
Given a small value $0<\ve<1$, we choose a suitable large constant $C = C(\alpha, \nu, \ve) $ (which will be defined explicitly in Appendix \ref{sect:step1}) and we set 
\begin{equation}\label{eq:definition_T1_T2}
T_1 := \min \{ i \ : \  t_i < C\},\quad  T_2:=\min \{ i \ : \ \Theta_i<\pi/2\}.
\end{equation}
We take
\begin{equation}\label{eq:min_T1_T2}
T:=\min \{ T_1,T_2\}.
\end{equation}
Let us denote by $N_i'$ the number of vertices in $\mathcal{B}_i$ which belong to $\K$.
The random variable $N_i'$ \emph{stochastically dominates} a
binomial random variable. More specifically, with $\succcurlyeq$ denoting stochastic domination,  conditional on $N_i$ and $\Theta_i$, we
have
\begin{equation}\label{eq:preview_N_i'}
N_i'\succcurlyeq \bin \left ( N_i, \frac{\Theta_i}{2\pi}\rho^{\r}\right ).
\end{equation}
This is the case because each vertex in $\mathcal{B}_i$ falls inside $\mathcal{A}^{(i)}$ with probability at least $\Theta_i / (2\pi)$ and, given 
this, it is connected to at least $\r$ of the vertices in $\mathcal{A}^{(i-1)}$ with probability at least $\rho^{\r}$. Furthermore, these events are  
independent for the set of vertices in $\mathcal{B}_i$, whereby (\ref{eq:preview_N_i'}) follows. 

For $i<T\leq T_2$ (cf. Definitions (\ref{eq:definition_T1_T2}), (\ref{eq:min_T1_T2})), the stochastic inequality (\ref{eq:preview_N_i'})
implies that 
\[
\E \left (N_i' \mid N_i,\Theta_i\right )\geq N_i \frac{\Theta_i}{2\pi}\rho^{\r}\geq N_i\frac{\rho^{\r}}{4}.
\]
Now, for any value $0<\delta<1 $, we can apply a standard Chernoff bound (\ref{eq:chernoff}), which leads to
\begin{equation}\label{eq:P_azuma}
\P \left (N_i'<(1-\delta)N_i\frac{\rho^{\r}}{4} \mid N_i, \Theta_i \right )\leq e^{-\delta^2 N_i\rho^{\r}/8}.
\end{equation}
Hence conditional on $\mathcal{E}_i$ (defined in (\ref{eq:N})), for any $\delta \in (0,1)$, we have that a.a.s.
\begin{equation}\label{eq:N_i'}
N_i'\geq (1-\delta)N_i\frac{\rho^{\r}}{4}.
\end{equation}
Each of these vertices will be connected to the previous band by at least $\r$ edges, hence we conclude showing that 
there is a positive constant $\kappa=\kappa(\alpha,C,\ve,\delta,\rho,\r)$ for which
\begin{equation}\label{eq:sum_N_i'}
|\K| \geq \sum_{i=0}^{T-1} N_i'\geq \kappa N.
\end{equation}
Let us set
\[
M_i^{(1)}(N):=\delta^2 N_{i}\rho^{\r}/8.
\]
Hence, by choosing $\delta$ to be an arbitrarily small constant, with probability at least
\begin{equation}\label{eq:lower_bd_probability}
1-\sum_{i=1}^{T-1}\left ( e^{-M_i^{(1)}(N)}+e^{-M_i^{(2)}(N)}\right ),
\end{equation}
(where $M_i^{(2)}(N)$ was defined in (\ref{eq:def_M2})) we have for $0<i<T$
\begin{equation*}
N_{i}'\geq (1-\delta)N_{i}\frac{\rho^{\r}}{4}.
\end{equation*}
Thus, the above implies that
\[
\begin{split}
\sum_{i=1}^{T-1} N_{i}'
& \geq \sum_{i=1}^T N_{i}(1-\delta)\frac{\rho^{\r}}{4}\geq \sum_{i=1}^{T-1} \E N_i (1-\ve)(1-\delta)\frac{\rho^{\r}}{4}\\
& \stackrel{Claim~\ref{claim:Ni}}{\geq} (1-\ve)^3(1-\delta)\sum_{i=1}^{T-1} N(e^{-\alpha t_i}-e^{-\alpha t_{i-1}})\frac{\rho^{\r}}{4}\\
& \geq (1-\ve)^3 (1-\delta)\frac{\rho^{\r}}{4}\sum_{i=1}^{T-1} Ne^{-\alpha t_i}\left (1-e^{-\alpha (t_{i-1}-t_i)}\right )\\
&  \stackrel{(\ref{eq:T_C_1})}{\geq} (1-\ve)^4 (1-\delta)\frac{\rho^{\r}}{4} N\sum_{i=1}^{T-1} e^{-\alpha t_i}.\\
\end{split}
\]
Recall from Section \ref{sect:C2_from_other_paper} that $\lambda=2\alpha-1$. 
In Appendix~\ref{sect:step1}, we show that 
\begin{equation} \label{eq:sum_exps}
\sum_{i=1}^{T-1} e^{-\alpha t_i} \geq e^{\alpha C/\lambda}.
\end{equation}
Setting, for example,
\[
\kappa=\kappa(\alpha,C,\ve,\delta,\rho,\r):=\frac{(1-\ve)^4 (1-\delta) \rho^{\r}}{4} e^{-\alpha C/ \lambda},
\]
we deduce (\ref{eq:sum_N_i'}), concluding the proof of parts \emph{(i)} (a.a.s.\ case) and \emph{(ii)} (with positive probability case).

\subsubsection{Part $(iii)$}
In this case it suffices to show that if $\varphi(N)=p(N)N^{1/2\alpha}=o(1)$ and $\r \geq 2$, then
\begin{equation}\label{eq:prob_0}
\P\bigl (\textnormal{there exists }v\in \V \textnormal{ has at least }\r\textnormal{ initially infected neighbors}\bigr )=o(1).
\end{equation}
The proof is based on the fact that for any vertex the probability of having at least $\r\geq 2$ infected neighbors after the first round is $o(N^{-1})$.

More precisely, we resort to Lemma \ref{lemma:deg_distrib}, which states that the degree $\mathbf{d} (v)$ of vertex $v\in \V$ (conditional on the type) is such that 
\begin{equation}\label{eq:dv_sum_ber}
\mathbf{d} (v) \preccurlyeq \sum_{\ell =1}^N \operatorname{Ber}\left (\frac{H_2 e^{t_v/2}}{N} \right ),
\end{equation}
where $H_2>0$ is as in Lemma \ref{lemma:deg_distrib}, and $\operatorname{Ber} (p)$ denotes a Bernoulli random variable with parameter $p$.
At this point we can distinguish between two cases: either $ t_v\geq R/10$, or $ t_v< R/10$, hence divide the set $\V$ into two disjoint sub-sets:
\[
\mathfrak{D}_3:=\left \{v\ : \ R/10\leq t_v \leq R/2\alpha+\omega(N)\right \},
\]
and
\[
\mathfrak{D}_4:=\left \{v\ : \ 0<t_v< R/10\right \}.
\]
In the first case, by Lemma \ref{lemma:deg_distrib} we have 
\begin{equation}\label{eq:lower_bd_dv}
\begin{split}
\P (\mathbf{d} (v) \geq 2H_2 e^{t_v/2} \ | \ t_v)
& \leq \P\left (\bin \left (N, \frac{H_2e^{t_v/2}}{N}\right )\geq 2H_2 e^{t_v/2}\right )\\
& \stackrel{(\ref{eq:chernoff_up})}{\leq } \exp \left (-\frac{H_2 e^{t_v/2}}{8}\right )=o(N^{-1}).
\end{split}
\end{equation}
Now we proceed as follows: to simplify the notation in the next calculation, denote by $ \mathfrak{in}(v)$ the number of initially
infected neighbors of $v$. Hence
\[
\begin{split}
\P & (v\in \mathfrak{D}_3 \ \mbox{and}\  \mathfrak{in}(v) \geq \r) =\int_{R/10}^{R/2\alpha+\omega(N)}\P (\mathfrak{in}(v)\geq \r\mid t_v)\overline{\rho} (t_v)dt_v\\
& = \int_{R/10}^{R/2\alpha+\omega(N)} \P(\mathfrak{in}(v)\geq \r \mid t_v, \mathbf{d}(v)\geq 2H_2 e^{t_v/2})\P(\mathbf{d}(v)\geq 2H_2 e^{t_v/2} \mid t_v)\overline{\rho} (t_v)dt_v\\
& \quad + \int_{R/10}^{R/2\alpha+\omega(N)} \P(\mathfrak{in}(v)\geq \r \mid t_v, \mathbf{d}(v) < 2H_2 e^{t_v/2})\P(\mathbf{d} (v)< 2H_2 e^{t_v/2} \mid t_v)\overline{\rho} (t_v)dt_v\\
& \leq \int_{R/10}^{R/2\alpha+\omega(N)} \P(\mathbf{d}(v)\geq 2H_2 e^{t_v/2} \mid t_v)\overline{\rho} (t_v)dt_v \\
& \quad +\int_{R/10}^{R/2\alpha+\omega(N)} \P(\mathfrak{in}(v)\geq \r \mid t_v, \mathbf{d}(v)< 2H_2 e^{t_v/2})
\overline{\rho}(t_v)dt_v\\
& \stackrel{(\ref{eq:lower_bd_dv}) \textnormal{ and Lemma } \ref{lemma:deg_distrib}}{\leq } o(N^{-1})+ \int_{R/10}^{R/2\alpha+\omega(N)}\P\left (\bin \left (2H_2 e^{t_v/2},p(N)\right )\geq \r\mid t_v\right )\overline{\rho}(t_v)dt_v.
\end{split}
\]
Recall that $\r\geq 2$. 
Hence we obtain
\[
\begin{split}
& 
\int_{R/10}^{R/2\alpha+\omega(N)}\P\left (\bin \left (2H_2 e^{t_v/2},p(N)\right )\geq \r\mid t_v\right )\overline{\rho} (t_v)dt_v\\
& \lesssim \int_{R/10}^{R/2\alpha+\omega(N)}\left ( 2H_2e^{t/2}p(N)\right )^\r e^{-\alpha t}dt \\
& \asymp p(N)^\r \int_{R/10}^{R/2\alpha+\omega(N)}e^{(\r/2-\alpha)t}dt \stackrel{\r \geq 2}{\asymp} p(N)^\r e^{(\r/2-\alpha)(R/2\alpha+\omega(N))}\\
& \asymp \left ( p(N)N^{1/2\alpha}\right )^\r N^{-1}e^{(\r/2-\alpha)\omega(N)} 
\asymp \varphi(N)^\r N^{-1}e^{(\r/2-\alpha)\omega(N)}. 
\end{split}
\]
By choosing $\omega(N)$ such that
\[
\varphi(N)=o(e^{-(1/2- \alpha/ \r)\omega(N)}),
\]
then the above calculation leads to 
\[
\int_{R/10}^{R/2\alpha+\omega(N)}\left ( 2H_2e^{t/2}p(N)\right )^\r e^{-\alpha t}dt\lesssim \varphi(N)^\r N^{-1}e^{(\r/2-\alpha)\omega(N)} = o(N^{-1}).
\]
Therefore we have
\[
\P (v\in \mathfrak{D}_3\ \mbox{and} \ \mathfrak{in}(v) \geq \r )=o(N^{-1}).
\]
Now we take care of the vertices $v\in \mathfrak{D}_4$. 
For simplicity of notation, we set
\begin{equation}\label{eq:def_m}
\mathbf{m}:=H_2 e^{t_v/2}.
\end{equation}
By Remark \ref{remark:K_2>1} we have that for every $\r\geq 2$
\begin{equation}\label{eq:m>1}
\mathbf{m} \geq H_2 >1.
\end{equation}
At this point on (\ref{eq:dv_sum_ber}) we apply Le Cam's Theorem, and conditional on $t_v$ we have 
\[
d_{\operatorname{TV}}(\mathbf{d} (v),\hat{D}_v) \leq 2 N\left (\frac{H_2 e^{t_v/2}}{N} \right )^2,
\]
where $\hat{D}_v$ denotes a random variable following a Poisson distribution with parameter $\mathbf{m}$, and $d_{\operatorname{TV}}$ is the total variation distance.
%
Hence we obtain
\begin{equation}\label{eq:d_TV}
d_{\operatorname{TV}}(\mathbf{d} (v),\hat{D}_v)\leq 2\frac{H_2^2 e^{t_v}}{N}\leq 2 \frac{H_2^2 e^{R/10}}{N}=o(1).
\end{equation}
This immediately implies that $\mathbf{d}(v)$ is, with a very good approximation (as $N\to \infty$), distributed like $ \hat{D}_v$.
Furthermore, inequality (\ref{eq:chernoff_up}) implies that for every $v\in \mathfrak{D}_4$ we have
\begin{equation}\label{eq:upper_bd_degree}
\P(\mathbf{d}(v)\geq 2H_2e^{t_v/2}\ | \ t_v)\leq \exp(-H_2 e^{R/20}/4).
\end{equation}
From now on we shall be conditioning on the event that every $v\in \mathfrak{D}_4$ has degree at most 
$2H_2e^{t_v/2} \stackrel{t_v \leq R/10}{\leq} 2H_2e^{R/20}$, which by (\ref{eq:upper_bd_degree}) occurs with probability 
$1-o \left(\frac{1}{N} \right)$.

To simplify the next calculation, set
\[
\mathfrak{n}:=2H_2e^{R/20}.
\]
This leads to
\[
\begin{split}
\P & \left ( \bin \left ( \mathbf{d}_v, p(N)\right )\geq \r \mid t_v\right )  = \sum_{\ell =\r}^\mathfrak{n} \P\left ( \bin \left ( \ell , p(N)\right )\geq \r \mid \mathbf{d}_v=\ell ,t_v\right ) \P( \mathbf{d}_v=\ell \mid t_v)\\
& \leq \sum_{\ell =\r}^\mathfrak{n} \left ( \ell  p(N)\right )^\r \left ( \P( \hat{D}_v=\ell)+\P( \mathbf{d}_v=\ell \mid t_v)-\P( \hat{D}_v=\ell)\right )\\
& \stackrel{(\ref{eq:d_TV})}{\leq} \sum_{\ell =\r}^\mathfrak{n}  \left ( \ell  p(N)\right )^\r \P( \hat{D}_v=\ell )+\sum_{\ell =\r}^\mathfrak{n} \left ( \ell  p(N)\right )^\r \left (\frac{H_2^2 e^{R/10}}{N}\right )\\
& \lesssim \sum_{\ell =\r}^\mathfrak{n}  p(N)^\r \left (  \ell^\r e^\mathbf{-m}\frac{\mathbf{m}^\ell}{\ell!}\right )+ \mathfrak{n}^{\r+1} p(N)^\r H_2^2 e^{-2R/5}.
\end{split}
\]
Now it is easy to see that there is a constant $K:=K(\r)>0$ such that for all $\ell\in \N$ we have
\[
\ell^\r \frac{\mathbf{m}^\ell}{\ell!}\leq K \frac{d^\r}{d\mathbf{m}^\r}\frac{\mathbf{m}^\ell}{\ell !}.
\]
Therefore we obtain
\[
\begin{split}
\P\left ( \bin \left ( \mathbf{d}_v, p(N)\right ) \geq \r \mid t_v\right )
& \lesssim K p(N)^\r e^\mathbf{-m}\sum_{\ell =\r}^\infty \frac{d^\r}{d\mathbf{m}^\r}\frac{\mathbf{m}^\ell}{\ell !}+\mathfrak{n}^{\r+1} p(N)^\r H_2^2 e^{-2R/5}\\
& \lesssim K p(N)^\r e^\mathbf{-m} \frac{d^\r}{d\mathbf{m}^\r}\sum_{\ell =0}^\infty \frac{\mathbf{m}^\ell}{\ell !}+(e^{R/20})^{\r+1} p(N)^\r e^{-2R/5}\\
& = K p(N)^\r e^\mathbf{-m}e^\mathbf{m}+e^{(\r+1)R/20} p(N)^\r e^{-2R/5}. 
\end{split}
\]
Now we show that $e^{(\r+1)R/20} p(N)^\r e^{-2R/5}=o(N^{-1})$.
This is easy to see, since $\r\geq 2$. In fact we have:
\[
\begin{split}
e^{(\r+1)R/20} p(N)^\r e^{-2R/5}
& \asymp N^{(\r+1)/10}N^{\r/2\alpha-\r/2\alpha} p(N)^\r N^{-4/5}\\
& =\varphi(N)^\r N^{\r(1/10-1/2\alpha)+1/10-4/5} \\
& \stackrel{\alpha<1}{<}\varphi(N)^\r N^{-2\r/5-7/10} ~ \stackrel{\r\geq 2}{<}~\varphi(N)^\r N^{-15/10}=o(N^{-1}).
\end{split}
\]
Hence, we can write
\[
\P (v\in \mathfrak{D}_4\ \mbox{and} \ \mathfrak{in}(v) \geq \r \mid t_v) \leq K p(N)^\r +o(N^{-1}).
\]
By integrating over the types and reasoning similarly as in  the case of $v\in \mathfrak{D}_3$ we have
\[
\begin{split}
& \P (v\in \mathfrak{D}_4\ \mbox{and} \ \mathfrak{in}(v) \geq \r )  \asymp \int_0^{R/10} p(N)^\r e^{-\alpha t}dt +o(N^{-1})\\
& \asymp p(N)^\r +o(N^{-1}) .
\end{split}
\]
But $p(N) \ll N^{-1/(2\alpha)}$. 
Moreover, since  $1/2 < \alpha <1$ we have $2\alpha < 2$. So as $\r \geq 2$ it follows that 
$\frac{\r}{2\alpha} >1$, whereby $p(N)^{\r} \ll N^{-1}$.
Hence, 
\[
 \P (v\in \mathfrak{D}_4\ \mbox{and} \ \mathfrak{in}(v) \geq \r ) = o(N^{-1}).
\]
The fact that this probability decreases asymptotically faster than $N^{-1}$ implies that no vertex becomes infected during the first round.
In other words, $|\mathcal{A}_f |=|\mathcal{A}_0|$ a.a.s.\ which concludes the proof of Theorem \ref{thm:main}.

\section{Proof of Corollary~\ref{cor:giant} and Theorem~\ref{cor:core}}\label{sect:proof_corollaries}
In this section we show how to obtain Corollary \ref{cor:giant} and Theorem \ref{cor:core} from Theorem \ref{thm:main}.
\begin{proof}[Proof of Corollary \ref{cor:giant}]
To show that there is a giant component, it suffices to set $\r=1$, and apply Theorem \ref{thm:main} with a high infection rate.
In fact, if there is at least one infected vertex in a connected component, then the bootstrap percolation process will eventually 
infect the whole component.
In this case, we can assume that $p=p(N)$ is a positive constant. 
This implies that a.a.s.\ at least 1 vertex in $\mathcal{B}_0$ is initially infected and as the graph induced by these vertices is
complete, it follows that all these vertices become infected during the second round. 
But by (\ref{eq:sum_N_i'}), it follows that a.a.s. 
$$ |\mathcal{K}_1 (\mathcal{B}_0)| \geq \kappa N, $$
for some $\kappa > 0$. Hence, the largest component of $\mathcal{G}(N;\alpha, \nu, \rho)$ has at least $\kappa N$ vertices. 
\end{proof}
\begin{proof}[Proof of Theorem~\ref{cor:core}]
The proof of this theorem is a byproduct of our proof. In fact, $\mathcal{K}_{\r} (\mathcal{B}_0)$ by its construction is a subgraph 
of minimum degree at least $\r$.
%
\end{proof}

\appendix

\section{The definition of $T$ and the proof of Lemma \ref{lemma:T=O(lnR)}} \label{sect:step1}
The first step to show Lemma \ref{lemma:T=O(lnR)} is given by the following result.
\begin{Claim}\label{claim:theta_i-t_i}
If $i\geq 1$ is such that
\begin{equation}\label{eq:assumption_i}
1- \alpha >\frac{2}{t_i}\ln \left ( \frac{4\pi}{(1-\ve)^4}t_i\right ),
\end{equation}
then
\[
t_i<\alpha t_{i-1}.
\]
Therefore
\[
t_i< \alpha^i\frac{R}{2}.
\]
\end{Claim}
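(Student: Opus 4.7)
The plan is simply to unfold the recurrence (\ref{eq:def_ti}) and rearrange. Writing that recurrence in the form
\[
\lambda t_{i-1} = t_i - 2\ln\!\left(\frac{4\pi}{\nu(1-\varepsilon)^4}\, t_i\right),
\]
and recalling that $\lambda = 2\alpha - 1 > 0$ since $\alpha > 1/2$, the target inequality $t_i < \alpha t_{i-1}$ is equivalent to $\lambda t_i < \alpha\lambda t_{i-1} = \alpha\bigl[t_i - 2\ln(\cdots)\bigr]$. Expanding $\lambda = 2\alpha - 1$ on the left and canceling $\alpha t_i$ from both sides yields the equivalent inequality
\[
(1-\alpha)\, t_i \;>\; 2\alpha \ln\!\left(\frac{4\pi}{\nu(1-\varepsilon)^4}\, t_i\right),
\]
i.e.\ $1-\alpha > \tfrac{2\alpha}{t_i}\ln\bigl(\tfrac{4\pi}{\nu(1-\varepsilon)^4}\,t_i\bigr)$. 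Since $\alpha < 1$ gives $2\alpha < 2$, the hypothesis (\ref{eq:assumption_i}) is strictly stronger than this, so it implies the desired inequality $t_i < \alpha t_{i-1}$.

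For the ``Therefore'' part, I would proceed by a one-line induction on $i$. The base case uses $t_0 = R/2$, so the first part immediately gives $t_1 < \alpha t_0 = \alpha R/2$ whenever the hypothesis is valid at $i=1$. For the inductive step, assuming $t_{j-1} < \alpha^{j-1} R/2$ for $j \leq i$, the first part of the claim gives $t_j < \alpha t_{j-1} < \alpha^{j}R/2$. Chaining these up to index $i$ yields $t_i < \alpha^i R/2$, provided that (\ref{eq:assumption_i}) holds for each of the intermediate indices; this is implicit in how the authors formulate the claim (and is easy to verify in practice, because $\ln(t)/t$ is decreasing for $t$ large, so once the hypothesis holds at some large $t_j$ it continues to hold at the smaller $t_{j+1}, t_{j+2}, \ldots$ until $t$ becomes small enough that the logarithm is negative, at which point the hypothesis is vacuously true).

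The whole argument is essentially bookkeeping: there is no ideas-content beyond correctly unfolding the defining recurrence. The only mild obstacle is keeping track of the signs when dividing by $\lambda$ and by $1-\alpha$ (both positive under the standing assumption $1/2 < \alpha < 1$), and making sure the direction of each inequality is preserved. Once the algebra is written out cleanly, both halves of the claim fall out without further work.
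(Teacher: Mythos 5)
Your proof of the first part is correct and uses essentially the same idea as the paper: unfold the defining recurrence $\lambda t_{i-1}=t_i-2\ln\bigl(\tfrac{4\pi}{\nu(1-\ve)^4}t_i\bigr)$ and rearrange. The only cosmetic difference is where the hypothesis is applied: the paper uses $t_i\leq t_{i-1}$ to bound $\ln(\cdot t_i)\leq\ln(\cdot t_{i-1})$ and then invokes the condition at $t_{i-1}$, whereas you apply the condition directly at $t_i$ and use $2\alpha<2$. Both routes land on $\lambda+(1-\alpha)=\alpha$. One small imprecision worth noting: the step ``$2\alpha<2$ so the hypothesis is strictly stronger'' only holds when the logarithm is nonnegative; if $\ln\bigl(\tfrac{4\pi}{\nu(1-\ve)^4}t_i\bigr)<0$ both the hypothesis and the target $(1-\alpha)t_i>2\alpha\ln(\cdots)$ are vacuous, so the implication still holds, but a one-line remark to that effect would tighten the argument. (Also, the displayed hypothesis in the claim is missing the $\nu$ that appears in (\ref{eq:def_ti}); you correctly carry $\nu$ in your manipulations.)

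The parenthetical justification at the end of your ``Therefore'' paragraph is, however, backwards. If $\ln(t)/t$ is \emph{decreasing} for large $t$, then as $t$ decreases the quantity $\ln(t)/t$ \emph{increases}, so hypothesis (\ref{eq:assumption_i}) becomes \emph{harder}, not easier, to satisfy at the smaller values $t_{j+1},t_{j+2},\dots$. This is precisely why the paper does not try to prove the propagation inside the claim: it instead imposes the explicit conditions (\ref{eq:T_C_2}) and (\ref{eq:T_C_3}) on $C$ and stops the recursion at $T_1$ (so that $t_i>\lambda C$ for all $i\leq T$), thereby guaranteeing the hypothesis along the whole chain. Your main argument is unaffected because you correctly condition on ``(\ref{eq:assumption_i}) holds for each of the intermediate indices,'' but the claim that this is ``easy to verify in practice'' via that monotonicity heuristic should be deleted or replaced by a pointer to the conditions in Appendix~\ref{sect:step1}.
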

\begin{proof}
%
Recall the definition of $t_i$ from (\ref{eq:def_ti}) and that $ \lambda=2\alpha-1$. 
Notice that Condition (\ref{eq:assumption_i}) can be rewritten as
\begin{equation*} 
2 \ln \left( \frac{4\pi}{\nu(1-\ve)^4}  t_i \right) < \left( 1 - \alpha \right) t_i. 
\end{equation*}
This condition implies that 
\begin{equation*} 
\begin{split}
t_i &= \lambda t_{i-1} + 2 \ln \left( \frac{4\pi}{\nu(1-\ve)^4} t_i \right) \stackrel{t_i \leq t_{i-1}}{<} 
\lambda t_{i-1} + 2 \ln \left( \frac{4\pi}{\nu(1-\ve)^4}  t_{i-1} \right) \\ 
&< \left( \lambda + 1 - \alpha \right) t_{i-1} = 
\left( 2\alpha - 1 + 1 - \alpha\right) t_{i-1} = \alpha t_{i-1}.
\end{split}
\end{equation*}
\end{proof}
Now we make the definition of $T$ more precise.
In particular, recalling Equations (\ref{eq:min_T1_T2}) and (\ref{eq:definition_T1_T2}), here we specify the constant $C=C(\alpha, \nu, \ve)$,
which must large enough so that all the following relations are satisfied:
\begin{align}
\label{eq:T_C_1}&e^{-C \alpha(1- \alpha)} < \ve, & \\ 
\label{eq:T_C_2}&\mbox{if } x - 2 \ln \left({4 \pi \over \nu (1-\ve)^4} x \right) \geq \lambda C, \mbox{ then } \frac{\nu(1-\ve)^4}{4\pi} < x,
&\\
\label{eq:T_C_3}&{2\over \lambda C}  \ln \left( {4 \pi \over \nu (1-\ve)^4} {\lambda C} \right)< {1 - \lambda \over 2},&\\
\label{eq:T_C_4}&{e^{-c \rho^{\r} \lambda C} \over 1- e^{-c \rho^{\r} \left(1- \alpha \right)C}} < {1\over 8},&\\
\label{eq:T_C_5}& \int_{\frac{(1-\eta^2)}{2}C}^\infty xe^{-x}dx< \frac{1}{16}\frac{(1-\eta^2)}{2\eta},& \\
\label{eq:T_C_6}& C > \max\left\{{4 \over \lambda}~{\alpha \over 1-\alpha^2}, {2\alpha \over (1-\alpha)^2 (1+\alpha)} \right\}.
\end{align}
Let us see now what some of these conditions imply (the rest will become clear in the next few pages).
Claim \ref{claim:theta_i-t_i} implies that 
\begin{equation*} 
e^{- \alpha (t_{i-1} - t_i)} < e^{-\alpha t_{i-1} \left(1 - \alpha \right)}. 
\end{equation*}
Thus, (\ref{eq:T_C_1}) implies that for any $1\leq i < T$ we have 
\begin{equation}  \label{eq:T_1} 
e^{- \alpha (t_{i-1} - t_i)} \stackrel{t_{i-1} \geq C}{<} \ve. 
\end{equation}
Condition (\ref{eq:T_C_2}) is used in order to ensure that for all $1\leq i \leq T$ we have $t_i>\lambda^i t_0$.
Indeed, using (\ref{eq:def_ti}) we have that for any $1\leq i\leq T$ 
\[
\lambda C \leq \lambda t_{T-1}\leq \lambda t_{i-1} = t_i - 2 \ln \left( \frac{4\pi}{\nu(1-\ve)^4}  t_i \right).
\]
But then Condition (\ref{eq:T_C_2}) implies that $t_i > \frac{\nu(1-\ve)^4}{4\pi}$, whereby 
$\ln \left( \frac{4\pi}{\nu(1-\ve)^4}  t_i \right) > 0$. In turn, 
\begin{equation} \label{eq:rec_low}
t_i > \lambda t_{i-1} \geq \lambda C, \  \mbox{for any}\ i\leq T. 
\end{equation}
Condition (\ref{eq:T_C_3}) together with the previous observation ($t_i > \lambda C$, for any $i\leq T$) imply that the hypothesis of 
Claim~\ref{claim:theta_i-t_i} holds for all $i\leq T$. 
Hence, $T = O (\log R)$ as in Lemma~\ref{lemma:T=O(lnR)}

\medskip

\subsubsection*{Proof of (\ref{eq:sum_exps})}
As shown in Proposition~\ref{prop:Theta_i>pi/2}, for all $0< i <T $ with probability bounded from below by 
$1-\exp{(-\Theta \left( M_i^{(2)} (N) \right)}$ we have
\begin{equation*}
\Theta_i \geq \Theta_{i-1}\left ( 1-\left ( 2\frac{\theta^{(i)}}{\theta^{(i-1)}}t_{i-1}+e^{-c \rho^{\r} t_i}+\bigl (\theta^{(i)}\bigr )^{1/6}\right )
\right ).
\end{equation*}
Now note that if this event is realized for all $j \leq i < T$, we deduce that 
\[ 
\begin{split}
\Theta_i & \geq \Theta_1 \prod_{j=2}^i \left ( 1-\left ( 2\frac{\theta^{(j)}}{\theta^{(j-1)}}t_{j-1}+e^{-c \rho^{\r} t_j}+\bigl ( \theta^{(i)}\bigr )^{1/6}\right )\right )\\
& \geq \Theta_1 \left ( 1- \sum_{j=2}^{T-1} \left ( 2\frac{\theta^{(j)}}{\theta^{(j-1)}}t_{j-1}+e^{-c \rho^{\r} t_j}+\bigl ( \theta^{(j)}
\bigr )^{1/6}\right )\right ).
\end{split}
\] 
In the next section we use Conditions (\ref{eq:T_C_4})--(\ref{eq:T_C_6}) to show that for $N$ is sufficiently large we have
\begin{equation}\label{eq:fudamental_relations}
\sum_{i=2}^{T-1}\frac{\theta^{(i)}}{\theta^{(i-1)}}t_{i-1}<\frac{1}{16}; \quad \textnormal{and} \quad \sum_{i=2}^{T-1}e^{-c \rho^{\r} t_i}<\frac{1}{8}; \quad\textnormal{and} \quad \sum_{i=2}^{T-1}\left (\theta^{(i)}\right )^{1/6}<\frac{1}{8}.
\end{equation}
Hence, for all $0\leq i < T_1$ (with $T_1$ defined in (\ref{eq:definition_T1_T2}))
\begin{equation}\label{eq:lower_bound_Theta_i}
\Theta_i \geq \Theta_1 \left (1-\frac{3}{8}\right )> \frac{\pi}{2},
\end{equation}
which in turn implies that in relation (\ref{eq:min_T1_T2}) we have $T=\min\{T_1,T_2\}=T_1$.
Moreover, this occurs with probability at least $1 - \sum_{i=2}^{T-1} \exp \left(- \Theta (M_i^{(2)}(N)) \right)$. 
If $T=T_1$, then (\ref{eq:rec_low}) implies that 
\[
\sum_{i=1}^{T-1} e^{-\alpha t_i} \geq \sum_{i=1}^{T-1} e^{-\alpha {1 \over \lambda^i} t_T} \geq  
\sum_{i=1}^{T-1} e^{-\alpha {1 \over \lambda^i} C} \geq 
e^{-\alpha C/\lambda}.
\]

\section{Bounds on the error terms in (\ref{eq:fudamental_relations})}\label{sect:steps}
\paragraph{Bound on first error term}
Here we show that $\sum_{i=2}^T \frac{\theta^{(i)}}{\theta^{(i-1)}}t_{i-1}<1/16$, when $N$ is large enough.
First note that 
$$\sum_{i=2}^T \frac{\theta^{(i)}}{\theta^{(i-1)}}t_{i-1} = \sum_{i=2}^{T} t_{i-1}e^{(t_i-t_{i-2})/2} 
= \sum_{i=1}^{T-1} t_{i}e^{(t_{i+1}-t_{i-1})/2}.$$
By Claim~\ref{claim:theta_i-t_i}, we have $t_{i+1} < \alpha t_i$ and $t_{i-1} > t_i/\alpha$. Thereby, 
$t_{i+1} - t_{i-1} < t_i (\alpha - 1/\alpha)$. So we have 
\[
\sum_{i=2}^T \frac{\theta^{(i)}}{\theta^{(i-1)}}t_{i-1} \leq \sum_{i=1}^{T-1} t_{i} e^{-{t_i \over 2} 
\left( {1\over \alpha} - \alpha \right)} = {2\alpha \over 1 -\alpha^2 } 
\sum_{i=1}^{T-1} {1\over 2}\left( {1\over \alpha} - \alpha \right) t_{i} e^{-{t_i \over 2} 
\left( {1\over \alpha} - \alpha \right)}.
\]
Now, the last sum can be bounded as follows: 
\begin{equation*} 
\sum_{i=1}^{T-1} {1\over 2}\left( {1\over \alpha} - \alpha \right) t_{i} e^{-{t_i \over 2} 
\left( {1\over \alpha} - \alpha \right)} 
\leq \int_{{1\over 2}\left( {1\over \alpha} - \alpha \right) t_T - 1}^\infty x e^{-x}dx 
\leq \int_{{1\over 2}\left( {1\over \alpha} - \alpha \right) \lambda C - 1}^\infty x e^{-x}dx. 
\end{equation*}
Let us set $\eta : = {1\over 2}\left( {1\over \alpha} - \alpha \right)$.
Indeed, this is bound holds provided that $C$ is large enough so that for any 
$x > \eta \lambda C - 1$, the function 
$xe^{-x}$ is decreasing, namely for $x>1$, and, moreover, $\eta(t_{i-1} - t_i) \geq 1$. 

In particular, the former holds if 
$C > {4 \over \lambda}~{\alpha \over 1-\alpha^2}$, which is
implied by Condition (\ref{eq:T_C_6}). 

Moreover, we have 
\[
t_{i-1} - t_i > (1-\alpha) t_{i-1} > (1-\alpha) C
\]
So if $\eta (1-\alpha ) C >1$, then $\eta(t_{i-1} - t_i) \geq 1$ and the approximation of the sum by an integral is valid. 
This condition is $C > {1 \over \eta (1-\alpha)} = {2\alpha \over (1-\alpha)^2 (1+\alpha)}$, which is again implied by (\ref{eq:T_C_6})
Therefore, bounding the sum by the integral is valid.

\paragraph{Bound on the second error term}
Now, we verify that
\[
\sum_{j=2}^{T} e^{-c \rho^{\r} t_i} < {1\over 8}.
\]
As $t_i - t_{i-1} < (\alpha - 1)C$, we can write
\[
\sum_{j=2}^{T} e^{-c \rho^{\r} t_j} < e^{-c \rho^{\r} t_T} \sum_{j=0}^\infty e^{-j c \rho^{\r}\left( 1- \alpha \right) C}.
\]
Also, as we have shown above $ t_T > \lambda C$.
Therefore, 
\[
e^{-c\rho^{\r} t_T} \sum_{j=0}^\infty e^{-j c \rho^{\r}\left( 1- \alpha \right) C} < {e^{-c \rho^{\r} \lambda C} \over 1- e^{-c \rho^{\r} 
\left(1- \alpha \right)C}} \stackrel{(\ref{eq:T_C_4})}{<} {1\over 8}.
\]

\paragraph{Bound on the third error term}
Now we need to check that, when $N$ is large enough, we get
\[
\sum_{i=2}^T \left ( \theta^{(i)}\right )^{1/6} < 1/8.
\]
To show this, we use Claim \ref{claim:theta_i-t_i}, which holds by assumption (\ref{eq:T_C_3}). We have that
\[
\begin{split}
\sum_{i=2}^T \left (\theta^{(i)} \right )^{1/6}
& =\sum_{i=2}^T e^{1/12(t_i+t_{i-1}-R)}\stackrel{t_{i-1} < R/2}{\leq} \sum_{i=2}^T e^{1/12(t_i-R/2)}\\
& \stackrel{t_i < \alpha^i t_0}{\leq} \sum_{i=1}^T e^{1/12\left (\alpha^i-1\right )R/2}= e^{-R/24 }\sum_{i=1}^T e^{ \alpha^i R/24}\\
& \leq e^{-R/24 } T e^{\alpha R/24}.
\end{split}
\]
Since $\alpha <1$ and $T=O\left( \ln R \right)$ by Lemma \ref{lemma:T=O(lnR)}, the bound follows.

\section{Proof of (\ref{eq:lower_bd_probability})} \label{sect:linearly_many_vertices}
We conclude our proof showing that the sum of the the error terms obtained in (\ref{eq:P_azuma}) and (\ref{eq:def_M2}) is of order 
$o(1)$, i.e., that (\ref{eq:lower_bd_probability}) is $1-o(1)$. 

Since the event $\mathcal{E}_i$ (defined in Equation (\ref{eq:N})) is realized, the lower bound on $N_i$ given by 
(\ref{eq:N_iFinalBounds}) implies that there is a constant $\xi^{(1)}>0$ such that for $N$ large enough
\[
M_i^{(1)}(N)\geq \xi^{(1)} N^{1-\alpha}, \quad 0\leq i<T.
\]
A straightforward calculation gives
\[
\sum_{i=1}^{T-1} e^{-M_i^{(1)}(N)}\leq T e^{-\xi^{(1)}N^{1-\alpha}}=o(1),
\]
where we used the fact that $T=O(\ln R )$.

Now we seek the analogous relation for $M_i^{(2)}(N)$, which was defined in (\ref{eq:def_M2}).
Using the definition of $\theta^{(i)}$ and the fact that $t_i\leq R/2$ for all $i\geq 0$, we see that there is a constant $\xi>0$ such that for all $1\leq i < T$
\[
\begin{split}
t_i (\theta^{(i)})^{2/3}
& \lesssim \xi Re^{(t_i-R/2)/3} \stackrel{
Claim~\ref{claim:theta_i-t_i}}{\leq}  \xi Re^{(\alpha^i R/2-R/2)/3} \\
& 
\leq \xi Re^{(\alpha -1)R/6}
\lesssim e^{(\alpha -1)R/12},
\end{split}
\]
where the last asymptotic inequality holds for large enough $N$, and the one before the last is due to the fact that $\alpha<1$.

Therefore, there is a constant $\xi^{(2)}>0$ such that for $N$ sufficiently large for all $0< i<T$ 
\[
M_i^{(2)}(N)\stackrel{(\ref{eq:def_M2})}{\geq} \xi^{(2)} N^{(1-\alpha )/6}.
\]
which implies
\[
\sum_{i=1}^{T-1} e^{-M_i^{(2)}(N)}\leq T e^{-\xi^{(2)} N^{(1-\alpha)/6}}=o(1).
\]
Hence (\ref{eq:sum_N_i'}) holds with probability bounded from below by
\[
\prod_{i=1}^{T-1}\left (1-e^{-M_i^{(1)}(N)}\right )\left (1-e^{-M_i^{(2)}(N)}\right )\geq  1-\sum_{i=1}^{T-1}\left (e^{-M_i^{(1)}(N)}+e^{-M_i^{(2)}(N)}\right ) = 1-o(1).
\]

\nocite{}
\bibliographystyle{amsalpha}
\bibliography{bibliography_1}

\end{document}